\documentclass[a4paper,12pt,leqno]{amsart}
\usepackage{latexsym}
\usepackage[all]{xy}
\usepackage{amssymb,bm} 
\usepackage{amsmath} 
\usepackage{tikz}
\usetikzlibrary{arrows,cd}  
\usepackage{amsthm}

\usepackage{CJKutf8} 

\def\Z{{\mathbb{Z}}}
\def\K{{\mathbb{K}}}
\def\A{{\mathcal{A}}}

\DeclareMathOperator{\rank}{rank}

\DeclareMathOperator{\Der}{Der}

\DeclareMathOperator{\pd}{pd}
\DeclareMathOperator{\depth}{depth}

\DeclareMathOperator{\Hom}{Hom}

\DeclareMathOperator{\supp}{supp}  
\DeclareMathOperator{\reg}{reg}  
\newcommand{\mideal}{\ensuremath{\mathfrak{m}}} 
\newcommand{\vertex}{\ensuremath{\mathrm{Vert}}} 

\numberwithin{equation}{section}

\newtheorem{theorem}{Theorem}[section]
\newtheorem{prop}[theorem]{Proposition}
\newtheorem{cor}[theorem]{Corollary}
\newtheorem{lemma}[theorem]{Lemma}

\theoremstyle{definition}

\newtheorem{example}[theorem]{Example}
\theoremstyle{remark}
\newtheorem{rem}[theorem]{Remark}

\title{Vector fields of graphic arrangements and\\
face rings of simplicial posets}

\author{Takuro Abe}
\address{
Takuro Abe,
Department of Mathematics,
Rikkyo University, 3-34-1 Nishi Ikebukuro, Toshima-ku, 1718501 Tokyo,
Japan.
}
\email{abetaku@rikkyo.ac.jp}

\author{Satoshi Murai}
\address{
Satoshi Murai,
Department of Mathematics
Faculty of Education
Waseda University,
1-6-1 Nishi-Waseda, Shinjuku, Tokyo 169-8050, Japan}
\email{s-murai@waseda.jp}

\begin{document}

\begin{abstract}
A graphic arrangement $\A_G$ associated with a simple graph $G$ is a classical and well-studied object in the theory of hyperplane arrangements.
In this note, we show that, for a connected graph $G$,
a slight modification of the logarithmic vector field $D(\A_G)$ of $\A_G$ is isomorphic to the face ring of a certain simplicial poset.
This allows us to give formulas for several algebraic invariants of $D(\A_G)$, such as its Hilbert series, local cohomology, projective dimension, and Castelnuovo--Mumford regularity,
in terms of combinatorial and topological information about the corresponding simplicial poset.
As a by-product, we also give an explicit vector space basis of $D(\A_G)$.
\end{abstract}

\maketitle

\section{Introduction}
The logarithmic vector field $D(\A)$ of a hyperplane arrangement $\A$ is one of the central objects in the algebraic study of hyperplane arrangements.
A major theme in the study of logarithmic vector fields is their freeness, and the freeness of hyperplane arrangements has been studied extensively.
For a free arrangement $\A$, 
the module structure is comparatively simple, since $D(\A)$ is free, 
whereas for non-free arrangements it is generally difficult to describe the module structure of $D(\A)$ explicitly.
In this note, we give a concrete combinatorial description of  $D(\A_G)$ for graphic arrangements $\A_G$,
which form one of the most fundamental classes of arrangements,
by establishing a connection between $D(\A_G)$ and another object in algebraic combinatorics, namely face rings of simplicial posets.
Our result enables us to study algebraic properties of $D(\A_G)$ from combinatorial and topological information about simplicial posets using ideas from Stanley--Reisner theory.

Let us introduce the necessary notation.
Let $\K$ be a field,
$V=\K^\ell$ and $S=\K[x_1,\dots,x_\ell]$ the polynomial ring over $\K$.
For a linear hyperplane $H$ of $V$, we write $\alpha_H$ for a defining linear form of $H$,
and for a linear form $\alpha$, we write $H_\alpha$ for the linear hyperplane of $V$ defined by $\alpha$.
Let $\Der(S)=\bigoplus_{i=1}^\ell S \cdot \partial_{x_i}$ be the module of derivations of $S$.
The {\bf logarithmic vector field} $D(\A)$ of a linear hyperplane arrangement $\A$ in $V$ is the $S$-module
\[D(\A)= \{ \theta \in \Der(S) \mid \theta \alpha_H \in (\alpha_H) \text{ for all }H \in \A\}.\]

We next recall the face ring of a simplicial poset, introduced by Stanley \cite{St} as a generalization of the Stanley--Reisner ring of a simplicial complex.
A {\bf simplicial poset} $P$ is a finite poset with a minimal element $\hat 0$ such that, for every $\sigma \in P$, the interval $[\hat 0, \sigma]=\{\tau \in P \mid \hat 0 \leq \tau \leq \sigma \}$ is a Boolean algebra.
We write $\rank(\sigma)=k$ if the Boolean algebra $[\hat 0,\sigma]$ has rank $k$.
Every simplicial poset $P$ is a CW-poset, that is, the face poset of some regular CW-complex $\Gamma(P)$ \cite{Bj84}.
Geometrically, simplicial posets generalize simplicial complexes. Their corresponding CW-complexes may be viewed as regular CW-complexes whose cells are simplices, but unlike simplicial complexes, two simplices are allowed to intersect in an arbitrary subcomplex of their boundaries rather than in a single common face.
Let $P$ be a simplicial poset and let $A=\K[x_\sigma \mid \sigma \in P]$ with the grading $\deg x_\sigma=\rank \sigma$.
If $\sigma,\tau \in P$ have a common upper bound,
then $\sigma,\tau$ have a meet, that is,
a unique largest lower bound.
In this case, we write $\sigma \wedge \tau$ for the meet of $\sigma$ and $\tau$.
The {\bf face ring} $\K[P]$ of $P$ is the quotient ring $A/I_P$,
where $I_P$ is the graded ideal of $A$ generated by the following elements:
\begin{itemize}
    \item $x_{\hat 0}-1$;
    \item $x_\sigma x_\tau$ for all pairs $\sigma,\tau \in P$ that have no common upper bound;
    \item $x_\sigma x_\tau-x_{\sigma \wedge \tau} (\sum_\rho x_\rho)$ for all pairs $\sigma,\tau \in P$ that have a common upper bound, where $\rho$ runs over all minimal common upper bounds of $\sigma$ and $\tau$.
\end{itemize}

We now state our main result.
Let $G$ be a simple graph with vertex set $[\ell]=\{1,2,\dots,\ell\}$.
The {\bf graphic arrangement} $\A_G$ of $G$ is the arrangement  defined by
\[\A_G=\{H_{x_u-x_v} \subset \K^\ell \mid \{u,v\} \text{ is an edge of }G\}.\]
Let $R=\K[x_i-x_j\mid 1 \leq i < j \leq \ell]\subset S$.
Instead of $D(\A_G)$, we consider the module
\[D_R(\A_G) = D(\A_G) \cap \left( \bigoplus_{i=1}^\ell R \cdot \partial_{x_i}\right).\]
This modification is natural for graphic arrangements, since their defining equations involve only the differences $x_i-x_j$; moreover, no information is lost, since elements of $D_R(\A_G)$ generate $D(\A_G)$.
Define the poset $P_G$ by\footnote{There are several known ways to define simplicial posets from graphs \cite{BK,FGG,MMP}. Our definition looks similar to the constructions in \cite{BK,FGG} but is slightly different.}
\[P_G=\big\{\big(V(C),T\big) \mid T \subsetneq [\ell], \text{ $C$ is a connected component of $G-T$}\big\},\]
where $G-T$ is the graph obtained from $G$ by removing the vertices in $T$
and $V(C)$ is the set of vertices of $C$,
with the partial order
\[
(W,T) \leq (W',T') \Leftrightarrow
W \supset W' \text{ and } T \subset T'.
\]
Thus an element of $P_G$ records a set of deleted vertices together with one connected component that remains after the deletion.
The order relation reflects deleting more vertices and passing to smaller connected components.
See Figure \ref{fig1}.
As we explain later in Lemma \ref{graphposet}, if $G$ is connected then
the poset $P_G$ is a simplicial poset with minimal element $([\ell],\emptyset)$.
We regard $\K[P_G]$ as an $S$-module by identifying $x_i$ with $\sum_{(W,T) \in P_G,\ \! T=\{i\}} x_{(W,T)}$.
Here is our main result.

\begin{figure}
    \centering
    \includegraphics[width=0.9\linewidth]{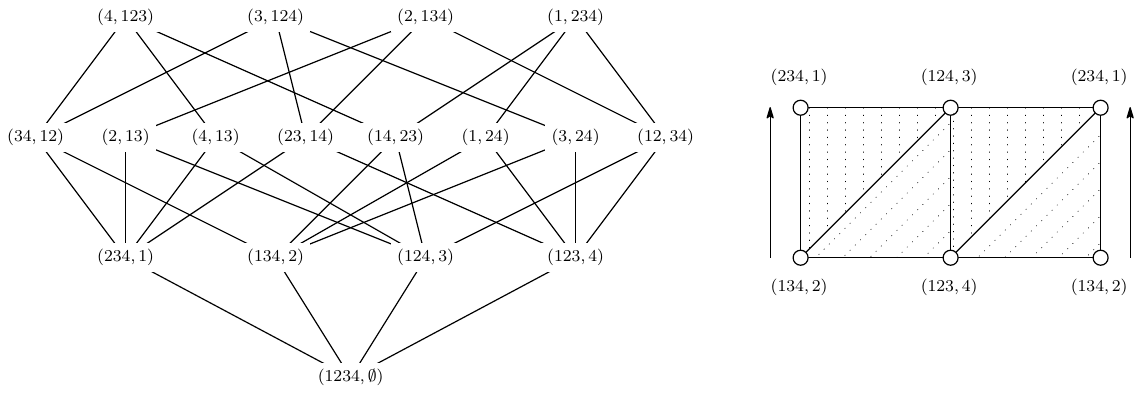}
    \caption{The simplicial poset $P_G$ (left) and the corresponding regular CW-complex $\Gamma(P_G)$ (right) for the graph $G=$\raisebox{-2mm}{\includegraphics[width=0.06\linewidth]{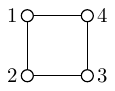}}.}
    \label{fig1}
\end{figure}

\begin{theorem}
\label{thm:1.1}
If $G$ is a connected graph with vertex set $[\ell]$, then
\[D_R(\A_G) \cong \K[P_G]\]
as $R$-modules.
\end{theorem}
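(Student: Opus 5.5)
The plan is to realize $D_R(\A_G)$ as a \emph{ring} and to define an explicit ring map from $\K[P_G]$ onto it. Identify $\bigoplus_{k=1}^\ell R\cdot\partial_{x_k}$ with $R^\ell$ by $\theta=\sum_k f_k\partial_{x_k}\mapsto (f_1,\dots,f_\ell)$. Then
\[
D_R(\A_G)=\{(f_k)\in R^\ell \mid f_u-f_v\in (x_u-x_v)\text{ for every edge }\{u,v\}\},
\]
a module of generalized splines on $G$. This set is closed under coordinatewise multiplication, since $f_uf'_u-f_vf'_v=f_u(f'_u-f'_v)+f'_v(f_u-f_v)$. It contains the unit $\sum_k\partial_{x_k}$. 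So $D_R(\A_G)$ is a commutative $R$-algebra.

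First I record the structure of $P_G$ that I need (this is essentially Lemma \ref{graphposet}). For $(W,T)\in P_G$ and $T'\subset T$, the set $W$ is connected in $G-T'$, so it lies in a unique component of $G-T'$. Hence $[\hat 0,(W,T)]\cong 2^T$ and $\rank(W,T)=|T|$. Two elements $(W,T)$ and $(W',T')$ have a common upper bound iff $W\cap W'\neq\emptyset$. In that case their meet is $(W'',T\cap T')$, where $W''$ is the component of $G-(T\cap T')$ containing $W\cup W'$. Their minimal common upper bounds are the $(C,T\cup T')$ with $C$ a component of $G-(T\cup T')$ contained in $W\cap W'$.

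Next I define, for $\sigma=(W,T)$,
\[
\theta_\sigma=\sum_{k\in W}\prod_{t\in T}(x_k-x_t)\,\partial_{x_k}.
\]
This lies in $D_R(\A_G)$. Let $\{u,v\}$ be an edge. If $u,v\in W$, the coordinate difference is a difference of values of one polynomial at $x_u$ and $x_v$, hence divisible by $x_u-x_v$. If both are outside $W$, it is $0$. If $u\in W$ and $v\notin W$, then $v\in T$ because $W$ is a component of $G-T$, so the coordinate at $u$ contains the factor $x_u-x_v$.

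Define $\varphi\colon A\to D_R(\A_G)$ by $x_\sigma\mapsto\theta_\sigma$, using the coordinatewise product. Then $\varphi$ kills $I_{P_G}$:
\begin{itemize}
\item $\theta_{\hat 0}=\sum_k\partial_{x_k}$ is the unit, so $x_{\hat 0}-1\mapsto 0$.
\item If $W\cap W'=\emptyset$, the two supports are disjoint, so $\theta_\sigma\theta_\tau=0$.
\item Otherwise the $k$-th coordinate of $\theta_\sigma\theta_\tau$ is nonzero only for $k\in W\cap W'$, and there it equals $\prod_{T}\prod_{T'}=\prod_{T\cap T'}\prod_{T\cup T'}$. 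This is exactly the $k$-th coordinate of $\theta_{\sigma\wedge\tau}\sum_\rho\theta_\rho$, because $W\cap W'$ is the disjoint union of the components $C$ above.
\end{itemize}
The map is also compatible with the $R$-structure. The element $\sum_{T=\{i\}}x_{(W,T)}$ goes to $\sum_{k\ne i}(x_k-x_i)\partial_{x_k}$. Hence $x_j-x_i$, acting on $\K[P_G]$ as $\sum_{T=\{i\}}x_{(W,T)}-\sum_{T=\{j\}}x_{(W,T)}$, goes to $(x_j-x_i)\sum_k\partial_{x_k}$. So $\varphi$ induces an $R$-linear (indeed $R$-algebra) map $\bar\varphi\colon\K[P_G]\to D_R(\A_G)$, graded if derivations are graded by the degree of their coefficients.

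For injectivity, I would use the standard description of $\K[P]$, going back to Stanley \cite{St}, as a reduced ring embedding into a product of polynomial rings attached to the maximal faces. Concretely, I would use its $\K$-basis of chain monomials $x_{\sigma_1}^{a_1}\cdots x_{\sigma_r}^{a_r}$ with $\sigma_1<\dots<\sigma_r$. I would then show that their images are linearly independent by looking at a single coordinate $k$ at a time. Only chains with $k\in W_{\sigma_r}$ contribute to coordinate $k$. After the substitution $y_t=x_k-x_t$ these contributions become distinct monomials in the $y_t$, $t\in T_{\sigma_r}$, and I expect a triangularity argument to finish. Comparing Hilbert series, $\sum_{\sigma}(t/(1-t))^{\rank\sigma}$ on the face-ring side, would then reduce surjectivity to computing $\dim_\K$ of each graded piece of $D_R(\A_G)$.

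The main obstacle is surjectivity: every spline must be an $R$-combination of products of the $\theta_\sigma$. I would argue by induction on $|T|$ through a filtration. Given $f\in D_R(\A_G)$, subtract $f_v\cdot\sum_k\partial_{x_k}$ to make coordinate $v$ vanish. A spline vanishing on a set $T$ splits as a sum over the components $W$ of $G-T$ of splines supported on $W$. On each $W$ one then chooses a further vertex $w\in W$ and peels off the value at $w$ using $\theta_{(W,T)}$, $\theta_{(W',T\cup\{w\})}$, and so on.

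The difficulty is that vanishing at $v$ only forces divisibility by $x_u-x_v$ at neighbours $u$ of $v$, not on all of $W$. So the peeling must be organized with the face-ring relations, allowing coefficients from $\K[P_G]$ rather than just from $R$. If this direct route gets stuck, I would fall back on computing $\dim_\K D_R(\A_G)_d$ via the exact sequences $0\to\{f: f_v=0\}\to D_R(\A_G)\to R\to 0$ together with the component decomposition, and match the result with the Hilbert series of $\K[P_G]$.
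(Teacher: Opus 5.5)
There is a genuine gap: you do not prove surjectivity. The rest of the proposal is sound. The spline-ring viewpoint is a cleaner way to obtain the paper's map $\phi_G$. With the relations you verify, $\bar\varphi(x_{(W,T)}X^{\bm a})=\theta^{\bm a}_{W,T}$. Moreover, $R$-linearity, which the paper extracts from Lemmas \ref{lem:obvious} and \ref{technical}, is automatic because $\bar\varphi$ is a ring map into the spline ring.

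Two small corrections. First, use the paper's sign $\prod_{t\in T}(x_t-x_k)$. With yours, $X_i-X_j\mapsto(x_j-x_i)\sum_k\partial_{x_k}$, so $\bar\varphi$ is $R$-linear only for the twisted action $x_i\mapsto -X_i$, not the one in the statement. Second, injectivity needs no triangularity. At coordinate $k$, the images of the basis elements $x_{(W,T)}X^{\bm a}$ of Lemma \ref{lem:Pbasis} with $k\in W$ are the pairwise distinct monomials $\prod_{t\in T}(x_t-x_k)^{1+a_t}$. These are monomials in the algebraically independent generators $x_t-x_k$ $(t\neq k)$ of $R$, so they are linearly independent.

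Surjectivity you correctly flag as the main obstacle, but you leave it open. The peeling stalls exactly where you say. The fallback via $0\to\{f: f_v=0\}\to D_R(\A_G)\to R\to 0$ computes nothing until you know the ideals $I_j=\{f_{v_j}\mid f\in D_R(\A_G),\ f_{v_1}=\cdots=f_{v_{j-1}}=0\}$, and the proposal never determines them.

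The paper fills this step with an external input, M\"uhlherr's generating set (Lemma \ref{mulherr}). Corollary \ref{cor:generator} upgrades it to the statement that the $\theta_{W,T}$, $(W,T)\in P_G$, generate $D(\A_G)$. Since $D(\A_G)=D_R(\A_G)\otimes_R S$, they also generate $D_R(\A_G)$ over $R$. In your framework this finishes at once, because the image of $\bar\varphi$ is an $R$-submodule containing every $\theta_{W,T}$.

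Alternatively, you can repair your filtration without M\"uhlherr. The missing observation is that vanishing on a set constrains $f$ along whole paths, not just at neighbours. Suppose $f$ vanishes on $S=\{v_1,\dots,v_{j-1}\}$, and let $w=v_j=u_0,u_1,\dots,u_m$ be a path with $u_m\in S$. Telescoping $f_w=\sum_{i<m}(f_{u_i}-f_{u_{i+1}})$ gives $f_w\in(y_{u_1},\dots,y_{u_m})$, where $y_u=x_u-x_w$.

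These ideals are monomial primes in the polynomial ring $R=\K[y_u\mid u\neq w]$. Hence every monomial $y^{\bm b}$ occurring in $f_w$ has support $T'$ meeting every path from $w$ to $S$. It follows that the component $C$ of $G-T'$ containing $w$ avoids $S$. The element $\theta^{\bm b-\bm 1_{T'}}_{V(C),T'}$ then lies in the image of $\bar\varphi$, vanishes on $S$, and has $w$-coordinate $y^{\bm b}$.

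Subtracting these elements produces a spline vanishing on $S\cup\{w\}$. After $\ell$ steps you reach $0$, which gives surjectivity. Without one of these two ingredients, the proposal does not yet prove the theorem.
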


Theorem \ref{thm:1.1} gives a concrete combinatorial description of $D(\A_G)$ in terms of $P_G$ (see Remark \ref{structureS}).
Moreover, many algebraic properties of the face ring $\K[P]$ can be computed from combinatorial and topological information about a simplicial poset $P$.
Theorem \ref{thm:1.1} enables us to apply such methods to study algebraic properties of $D(\A_G)$.
As applications of Theorem \ref{thm:1.1}, we prove the following results.
\begin{itemize}
    \item We give an explicit $\K$-basis for $D(\A_G)$ (Proposition \ref{prop:Kbasis}). This reproves the authors' recent result \cite[Corollary 7.13]{AM}, which gives a combinatorial formula for the Hilbert series of $D(\A_G)$.
    \item We give a formula for the Hilbert series of the local cohomology of $D(\A_G)$ in terms of the homology of links of $P_G$ (Corollary \ref{cor:localcohomology}).
    This leads to formulas for the projective dimension and regularity of $D(\A_G)$ (Corollary \ref{cor:pdandreg}).
    \item We prove that if $G$ is connected, then $P_G$ is a normal pseudomanifold, and that the freeness of $\A_G$ is equivalent to $\Gamma(P_G)$ being homeomorphic to a ball or a sphere (Propositions \ref{pmanifold} and \ref{freeness}). 
\end{itemize}

Finally, let us explain where the isomorphism in Theorem \ref{thm:1.1} comes from.
In our recent work on the module of logarithmic differential $1$-forms \cite[\S 7.4]{AM}, we show
\begin{align}
\label{dual}
D_R(\A_G) \cong H_{\mideal_S}^{\ell-1}\big(J(G^c)/(x_1\cdots x_\ell)\big)^\vee
\end{align}
as $R$-modules,
where $J(G^c)$ is a squarefree monomial ideal known as the cover ideal of the complement graph $G^c$ of $G$,
$H^*_{\mideal_S}(-)$ denotes the local cohomology module w.r.t.\ the maximal ideal $\mideal_S$ of $S$, and $M^\vee$ is the Matlis dual of an $S$-module $M$.
There is a combinatorial way to determine the structure of the right-hand side of \eqref{dual} (see \cite{Gr} and \cite[Theorem 1.8]{AS}), and this description led us to notice its resemblance to the face ring of a simplicial poset.
We do not use \eqref{dual} to prove Theorem \ref{thm:1.1}. Indeed, we prove the theorem by explicitly constructing an isomorphism using a generating set of $D(\A_G)$ recently obtained by M\"uhlherr \cite{Mu}, and our proof itself is elementary.
However, 
we would not have imagined that the two objects appearing in Theorem \ref{thm:1.1} could be isomorphic without this abstract observation.

The organization of this paper is as follows. In \S2 we prove Theorem \ref{thm:1.1}. In \S3 we give some applications of Theorem \ref{thm:1.1}.
\bigskip

\noindent
\textbf{Acknowledgments}:
The first author
is partially supported by JSPS KAKENHI Grant Numbers JP23K17298 and JP25K24692.
The second author is partially supported by JSPS KAKENHI Grant Numbers JP25K06943.
A connection between the $f$-vector of $P_G$ and the subgraph component polynomial in Remark \ref{rem3.4} was pointed out by ChatGPT (OpenAI, March 2026).

\section{Proof of Theorem \ref{thm:1.1}}

In this section, we prove our main result.
We construct an isomorphism between $\K[P_G]$ and $D_R(\A_G)$ by exhibiting compatible $\K$-bases for these two objects.

\subsection*{A basis for $\K[P_G]$}
First, we discuss a $\K$-basis for the face ring of a simplicial poset.
Let $P$ be a simplicial poset.
For an element $\sigma \in P$, we write $\rank(\sigma)=k$ if $[\hat 0,\sigma]$ is a Boolean algebra of {\bf rank} $k$.
The maximum of the ranks of the elements in $P$ is called the rank of $P$.
We call a rank $1$ element of $P$ a {\bf vertex} of $P$
and write $P_1=\{v \in P \mid \rank (v) =1\}$ for the set of vertices of $P$.
Also, for $\sigma \in P$, we call an element $v\in P_1$ with $v \leq \sigma$ a vertex of $\sigma$ and we denote the set of vertices of $\sigma$ by $\vertex(\sigma)$.
Note that if $\sigma$ has rank $k$ then $\sigma$ has exactly $k$ vertices.
The following $\K$-basis for $\K[P]$ was obtained by Duval \cite[Proposition 3.1]{Duval}.

\begin{lemma}[Duval]
\label{lem:duval}
Let $P$ be a simplicial poset. The set
\[ \biguplus_{\sigma \in P} \big\{x_\sigma m \mid m \text{ is a monomial in }\{x_v \mid v \in \vertex(\sigma)\}\big\}\]
is a $\K$-basis of $\K[P]$.
\end{lemma}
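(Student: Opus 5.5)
The plan is to prove spanning and linear independence separately. Spanning uses only the defining relations of $I_P$. Independence uses a fine multigrading together with a family of ring maps to polynomial rings, one map for each face.

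\emph{Preliminary identity.} First I would show that for any $\tau\le\sigma$ in $P$,
\[
x_\tau x_\sigma = x_\sigma\prod_{v\in\vertex(\tau)}x_v \quad\text{in } \K[P].
\]
To do this, I first prove by induction on $|U|$ that for a set $U$ of vertices,
\[
\prod_{v\in U}x_v=\sum_{\tau'} x_{\tau'}+(\text{terms killed below}),
\]
where $\tau'$ runs over faces with $\vertex(\tau')=U$. The induction step uses the relation $x_{\tau'}x_w=x_{\hat 0}\sum_\rho x_\rho$, together with $x_{\tau'}x_w=0$ when there is no common upper bound. Then I multiply by $x_\sigma$. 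If $\tau'\ne\tau$ has the same vertex set as $\tau$, then $\tau'$ and $\sigma$ have no common upper bound $\rho$. Indeed, otherwise $\tau,\tau'\le\rho$ would be two distinct elements of the Boolean algebra $[\hat0,\rho]$ with the same atoms. Hence $x_{\tau'}x_\sigma=0$ and only $x_\tau x_\sigma$ survives.

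\emph{Spanning.} Take any monomial $x_{\sigma_1}\cdots x_{\sigma_k}$. Repeatedly apply $x_\sigma x_\tau=x_{\sigma\wedge\tau}\sum_\rho x_\rho$, or $0$ if $\sigma,\tau$ have no common upper bound. Since $\sigma\wedge\tau\le\rho$, the preliminary identity turns each term $x_{\sigma\wedge\tau}x_\rho$ into $x_\rho$ times vertex variables. By induction on $k$, every monomial becomes a combination of terms $x_\rho m$ with $m$ a monomial in vertex variables. If some vertex $v$ in $m$ does not lie below $\rho$, I replace $x_vx_\rho$ by $\sum x_{\rho'}$ over minimal upper bounds $\rho'$ of $v$ and $\rho$. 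This keeps the degree fixed and strictly increases the rank of the face, so the process terminates. It ends at elements of the stated set.

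\emph{Independence.} Grade $A$ by $\mathbb{N}^{P_1}$, setting $\deg x_\sigma=\sum_{v\in\vertex(\sigma)}e_v$. All generators of $I_P$ are homogeneous for this grading: $x_{\hat0}-1$ has degree $0$, and the meet and join of $\sigma,\tau$ inside a Boolean interval have complementary vertex sets. Because $m$ involves only vertices of $\sigma$, the element $x_\sigma m$ has support exactly $\vertex(\sigma)$. So within one multidegree, all basis candidates have the same vertex set.

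For each $\sigma$, define $\varphi_\sigma:\K[P]\to\K[y_v\mid v\in\vertex(\sigma)]$ by $x_\tau\mapsto\prod_{v\in\vertex(\tau)}y_v$ if $\tau\le\sigma$, and $x_\tau\mapsto0$ otherwise. I need to check that $\varphi_\sigma$ kills $I_P$.
\begin{itemize}
\item If $\tau,\tau'\le\sigma$, exactly one minimal common upper bound of $\tau,\tau'$ lies below $\sigma$, namely their join in $[\hat0,\sigma]$. The relation is then sent to the identity $y^{\tau}y^{\tau'}=y^{\tau\wedge\tau'}y^{\tau\vee\tau'}$.
\item In all other cases both sides map to $0$, since any common upper bound lying below $\sigma$ would force $\tau,\tau'\le\sigma$.
\end{itemize}

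Given a homogeneous relation $\sum c_{\sigma',m}x_{\sigma'}m=0$ in a fixed multidegree with some $c_{\sigma,m}\ne0$, apply $\varphi_\sigma$. A face $\sigma'\le\sigma$ with $\vertex(\sigma')=\vertex(\sigma)$ must equal $\sigma$, so only terms with $\sigma'=\sigma$ survive. These map to the distinct monomials $y^{\vertex(\sigma)}m(y)$, which forces $c_{\sigma,m}=0$.

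The main obstacle is the preliminary identity $x_\tau x_\sigma=x_\sigma\prod_{v\in\vertex(\tau)}x_v$, because the defining relations never express $x_\tau$ directly in terms of vertex variables. The argument that the other faces with the same vertex set are annihilated after multiplying by $x_\sigma$ is the key point to get right.
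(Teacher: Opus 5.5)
Your proof is correct. The paper gives no proof to compare it with: it quotes this lemma from Duval.

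One standard route runs through Stanley's basis of standard monomials $x_{\sigma_1}\cdots x_{\sigma_k}$ with $\hat 0<\sigma_1\le\cdots\le\sigma_k$. Rewrite each such monomial as $x_{\sigma_k}\prod_{i<k}\prod_{v\in\vertex(\sigma_i)}x_v$, using exactly your preliminary identity. A multichain in the Boolean algebra $[\hat 0,\sigma_k]$ is recovered uniquely from the resulting exponent vector (layer-cake decomposition), so the two sets coincide.

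Your route does not import Stanley's basis. Spanning comes directly from the defining relations. Independence comes from the $\mathbb{N}^{P_1}$-grading together with the restriction maps $\varphi_\sigma$ to the polynomial rings of the Boolean intervals $[\hat 0,\sigma]$. In effect this shows that $\K[P]\to\prod_{\sigma\in P}\K[y_v\mid v\in\vertex(\sigma)]$ is injective. The citation keeps the note short, while your argument is elementary and self-contained. Its key step (distinct faces with the same vertex set have no common upper bound) mirrors the argument the paper uses in the proof of Lemma~\ref{lem:Pbasis}.

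A few small corrections:
\begin{itemize}
\item The ``terms killed below'' in your induction do not exist, because $\prod_{v\in U}x_v=\sum_{\vertex(\tau')=U}x_{\tau'}$ holds exactly in $\K[P]$. For $\vertex(\tau')=U\setminus\{w\}$, each minimal common upper bound $\rho$ of $\tau'$ and $w$ is their join in $[\hat 0,\rho]$, so $\vertex(\rho)=U$. Conversely, each face $\rho$ with $\vertex(\rho)=U$ arises from exactly one such $\tau'$, namely the element of $[\hat 0,\rho]$ with vertex set $U\setminus\{w\}$.
\item The homogeneity check rests on $\vertex(\sigma\wedge\tau)=\vertex(\sigma)\cap\vertex(\tau)$ and $\vertex(\rho)=\vertex(\sigma)\cup\vertex(\tau)$, not on these sets being complementary.
\item Your bullet list omits $\varphi_\sigma(x_{\hat 0}-1)=0$; it is immediate but should be recorded.
\end{itemize}
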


\begin{example}
Consider the simplicial poset $P=\{\hat 0, v_1,v_2,e_1,e_2\}$ shown below.
\begin{center}
\includegraphics[width=0.1\linewidth]{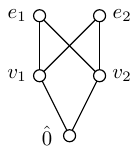}
\end{center}
Then its face ring is given by
\begin{align*}
\K[P]
&=\K[x_{\hat 0},x_{v_1},x_{v_2},x_{e_1},x_{e_2}]/(x_{\hat 0}-1,x_{v_1}x_{v_2}-x_{\hat 0} (x_{e_1}+x_{e_2}),x_{e_1}x_{e_2})\\
&\cong \K[x_{v_1},x_{v_2},x_{e_1},x_{e_2}]/(x_{v_1}x_{v_2}- (x_{e_1}+x_{e_2}),x_{e_1}x_{e_2})
\end{align*}
and it has a $\K$-basis
\begin{align*}
&\{x_{\hat 0}\} \uplus \{x_{v_1}x_{v_1}^a\mid a \in \Z_{\geq 0}\}
\uplus \{x_{v_2}x_{v_2}^a\mid a \in \Z_{\geq 0}\}\\
&\uplus \{x_{e_1}x_{v_1}^ax_{v_2}^b\mid a,b \in \Z_{\geq 0}\}
\uplus \{x_{e_2}x_{v_1}^ax_{v_2}^b\mid a,b \in \Z_{\geq 0}\}.
\end{align*}
\end{example}

Next, we consider the simplicial poset $P_G$ defined in the introduction.
Recall that a (finite simple) graph $G=(V(G),E(G))$ is a pair consisting of a finite set $V(G)$ and a family $E(G)$ of two-element subsets of $V(G).$
An element of $V(G)$ is called a {\bf vertex} of $G$ and an element of $E(G)$ is called an {\bf edge} of $G$.
The set $V(G)$ is called the {\bf vertex set} of $G$.
For a subset $W \subset V(G)$,
the {\bf induced subgraph} $G[W]$ of $G$ on $W$ is the graph whose vertex set is $W$ and whose edge set is
$\{ \{u,v\} \in E(G) \mid u,v \in W\}$.
Similarly, for $T \subset V(G)$, we write $G-T$ for the graph obtained from $G$ by removing vertices in $T$;
in other words, $G-T$ is the induced subgraph of $G$ on $V(G)\setminus T$.
We say that $T \subset [\ell]$ is a {\bf separator} of $G$ if $G-T$ is not connected.
A graph $G$ is {\bf $k$-connected} if $|V(G)|>k$ and $G-T$ is connected for every subset $T \subset V(G)$ of size $<k$.
We write $\mathrm{Comp}(G)$ for the set of all connected components of $G$.
The following lemma follows directly from the definition of $P_G$.

\begin{lemma}
\label{graphposet}
Let $G$ be a graph with vertex set $[\ell]$ and $(W,T) \in P_G$.
\begin{itemize}
    \item [(1)] For every $T'\subset T$, there is a unique subset $W_{T'} \supset W$ such that $(W_{T'},T') \in P_G$.
\item[(2)]
If $G$ is connected then $P_G$ is a simplicial poset of rank $\ell-1$.
\end{itemize}
\end{lemma}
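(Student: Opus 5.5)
For (1), I would take $W_{T'}$ to be the vertex set of the connected component of $G-T'$ that contains $W$. Since $T'\subset T$, the graph $G-T$ is an induced subgraph of $G-T'$. Hence the connected set $W$, which is nonempty because $T\subsetneq[\ell]$, lies inside a unique component $C'$ of $G-T'$. Put $W_{T'}=V(C')$. Then $(W_{T'},T')\in P_G$ and $W_{T'}\supset W$.

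Uniqueness follows because distinct components of $G-T'$ are disjoint. So any component whose vertex set contains $W$ must be $C'$.

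For (2), assume $G$ is connected. Then $([\ell],\emptyset)\in P_G$, and it lies below every $(W,T)$, so it is the minimum $\hat 0$. Fix $\sigma=(W,T)$. I claim that the map
\[
[\hat 0,\sigma]\to 2^T,\qquad (W',T')\mapsto T',
\]
is an order isomorphism.

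It lands in $2^T$, since $(W',T')\le (W,T)$ forces $T'\subset T$. It is surjective by (1). It is injective because $(W',T')\le\sigma$ forces $W'\supset W$, and (1) says $W'$ is then uniquely determined by $T'$.

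It is order-preserving by definition of the order. For the converse, suppose $T''\subset T'\subset T$. Then $W_{T'}\supset W$, and applying (1) to $(W_{T'},T')$ gives that $W_{T''}$, the component of $G-T''$ containing $W_{T'}$, also contains $W$. So it agrees with the $W_{T''}$ defined from $\sigma$. This gives $(W_{T''},T'')\le (W_{T'},T')$. Hence $[\hat 0,\sigma]$ is a Boolean algebra of rank $|T|$.

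It remains to compute the rank of $P_G$. Since $T\subsetneq[\ell]$, we have $|T|\le \ell-1$. Equality is attained by $T=[\ell]\setminus\{v\}$ and $W=\{v\}$, so $P_G$ has rank $\ell-1$.

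The only delicate point is the converse direction of the order isomorphism, namely the compatibility of the sets $W_{T'}$ across nested $T'$. That is handled by applying the uniqueness in (1) twice.
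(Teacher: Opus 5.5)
Your proof is correct and follows the same route as the paper. You take $W_{T'}$ to be the vertex set of the component of $G-T'$ containing $W$, and then identify $[\hat 0,(W,T)]$ with $\{(W_{T'},T')\mid T'\subset T\}\cong 2^T$. You also check two points the paper leaves implicit: the reverse order implication $T''\subset T'\Rightarrow W_{T''}\supset W_{T'}$ (by applying uniqueness twice), and that the rank $\ell-1$ is attained by $(\{v\},[\ell]\setminus\{v\})$.
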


\begin{proof}
For (1), let $C$ be the connected component of $G-T$ with $V(C)=W$.
For every $T' \subset T$, there is a unique connected component $C_{T'}$ of $G-T'$ that contains $C$.

Assume $G$ is connected.
Then $P_G$ has the minimal element $\hat 0=([\ell],\emptyset)$. 
Also, for $(W,T) \in P_G$, by part (1) the interval $[\hat 0,(W,T)]$
is precisely $\{(W_{T'},T') \mid T' \subset T\}$, which is a Boolean algebra of rank $|T|$.
\end{proof}

The $\K$-basis given in Lemma \ref{lem:duval} admits the following simpler description for $P_G$.
Let $G$ be a connected graph with vertex set $[\ell]$.
For $\sigma=(W,T) \in P_G$, we call $T$ the {\bf support} of $\sigma$ and write $\supp(\sigma)=T.$
Let 
\[X_i= \sum_{\sigma \in P_G,\ \! \supp(\sigma)=\{i\}} x_\sigma\]
for $i=1,2,\dots,\ell$.
For a non-negative integer vector $\bm a=(a_1,\dots,a_\ell)$, we write 
\[X^{\bm a}=X_1^{a_1}X_2^{a_2} \cdots X_\ell^{a_\ell}\]
and we call
\[
\supp(\bm a)=\{i \mid a_i>0\}
\]
the {\bf support} of $\bm a$.
For $T \subset [\ell]$,
we write
\[\Z^\ell_T=\{ \bm a \in \Z_{\geq 0}^\ell \mid \supp(\bm a) \subset T\}.\]

\begin{lemma}
\label{lem:Pbasis}
With the same notation as above, the set
\[
\mathrm{Base}(P_G)=\{x_{(W,T)} X^{\bm a} \mid (W,T) \in P_G,\ \bm a \in \Z_T^\ell\}\]
is a $\K$-basis of $\K[P_G]$.
\end{lemma}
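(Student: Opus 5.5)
The plan is to reduce to Duval's basis (Lemma \ref{lem:duval}) by showing that, for each $\sigma=(W,T)\in P_G$, the element $x_\sigma X^{\bm a}$ with $\bm a\in\Z^\ell_T$ equals the Duval basis element $x_\sigma m$, where $m=\prod_{i\in T} x_{v_i}^{a_i}$. Here $v_i$ denotes the vertex of $\sigma$ with support $\{i\}$.

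First I identify the vertices of $\sigma$. By Lemma \ref{graphposet}(1), for each $i\in T$ there is a unique $v_i=(W_{\{i\}},\{i\})\le\sigma$. Hence $\vertex(\sigma)=\{v_i\mid i\in T\}$, and $i\mapsto v_i$ is a bijection $T\to\vertex(\sigma)$. The monomials $m$ in $\{x_v\mid v\in\vertex(\sigma)\}$ are therefore in bijection with $\Z^\ell_T$.

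The key step is the identity $x_\sigma x_u=0$ whenever $u\in P_1$, $\supp(u)=\{i\}$ with $i\in T$, and $u\ne v_i$. Such $u$ and $\sigma$ have no common upper bound. Indeed, suppose $\rho=(W'',T'')\ge\sigma,u$. Then by the uniqueness in Lemma \ref{graphposet}(1) applied to $\rho$ with $T'=\{i\}\subset T''$, there is exactly one element of support $\{i\}$ below $\rho$. Both $u$ and $v_i$ lie below $\rho$, so $u=v_i$, a contradiction. The defining relations of $I_{P_G}$ then give $x_\sigma x_u=0$.

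It follows that $x_\sigma X_i=x_\sigma x_{v_i}$ for $i\in T$. Iterating requires care, since after multiplying we have $x_\sigma x_{v_i}\cdot X_j$ and must again kill the cross terms. This is fine because $x_\sigma$ is still a factor: $x_\sigma x_{v_i} x_u=(x_\sigma x_u)x_{v_i}=0$. By induction, $x_\sigma X^{\bm a}=x_\sigma\prod_{i\in T}x_{v_i}^{a_i}$ for all $\bm a\in\Z^\ell_T$. This is exactly Duval's element for $\sigma$ and $m$.

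The map $(\sigma,\bm a)\mapsto(\sigma,m)$ is a bijection onto the index set of Lemma \ref{lem:duval}, so $\mathrm{Base}(P_G)$ coincides with Duval's basis and is a $\K$-basis. The only real obstacle is the vanishing $x_\sigma x_u=0$, i.e.\ verifying that distinct vertices with the same support cannot lie under a common element. This follows from the uniqueness in Lemma \ref{graphposet}(1). If that uniqueness were only stated relative to $W$, I would argue directly: $W_{\{i\}}$ must be the component of $G-\{i\}$ containing $W''\neq\emptyset$, and this component is determined by $\rho$.
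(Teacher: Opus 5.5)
Your proposal is correct and follows essentially the same route as the paper: you reduce to Duval's basis by showing $x_\sigma X_{i}=x_\sigma x_{v_i}$ for $i\in T$, using Lemma \ref{graphposet}(1) to see that distinct vertices with the same support have no common upper bound, so the cross terms $x_\sigma x_u$ vanish. You also spell out the iteration to $x_\sigma X^{\bm a}$ and the bijection with Duval's index set, which the paper leaves implicit.
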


\begin{proof}
Fix a rank $r$ element $\sigma=(W,T) \in P_G$
and write $T=\{t_1,\dots,t_r\}$.
Let 
\[\vertex(\sigma)=\big\{ v_1=(W_1,\{t_1\}),v_2=(W_2,\{t_2\}),\dots,v_r=(W_r,\{t_r\})\big\}.\]
Fix $k$ with $1 \leq k \leq r$.
By Lemma \ref{lem:duval},
to prove the desired statement, it is enough to prove
\[x_\sigma x_{v_k}=x_\sigma X_{t_k}.\]
Write
\[X_{t_k}=x_{v_k}+x_{u_1}+\cdots+x_{u_m}.\]
Note that
$u_1,\dots,u_m$ are the other vertices of $P_G$ with support $\{t_k\}$.
If $\sigma$ and $u_i$ have a common upper bound $\tau$ then $\tau$ has two distinct vertices $v_k$ and $u_i$ having the same support,
which is impossible by Lemma \ref{graphposet}(1).
Hence $x_\sigma x_{u_i}=0$ for all $i$ and this proves
\[x_\sigma X_{t_k}=x_\sigma x_{v_k}\]
for $k=1,2,\dots,r$, as desired.
\end{proof}

Let $\bm e_1,\dots,\bm e_\ell$ be the standard vectors of $\Z^\ell$.
Let $x_{(W,T)}X^{\bm a} \in \mathrm{Base}(P_G)$ and $i \in [\ell]$.
Then $X_i(x_{(W,T)}X^{\bm a})$ can be written as a linear combination of the elements in $\mathrm{Base}(P_G)$ in $\K[P_G]$.
This is described in the following lemma.

\begin{lemma}
    \label{lem:obvious}
Let $G$ be a connected graph with vertex set $[\ell]$,
$(W,T) \in P_G$, $\bm a \in \Z_T^\ell$, and let $C$ be the connected component of $G-T$ such that $V(C)=W$.
    \begin{enumerate}
        \item [(1)] If $i \in T$, then $X_i(x_{(W,T)} X^{\bm a})=x_{(W,T)} X^{\bm a+\bm e_i} \in \mathrm{Base}(P_G)$.
        \item [(2)] If $i \in W$, then one has
        \begin{align} \label{kakeru} X_i(x_{(W,T)}X^{\bm a})= \sum_{C' \in \mathrm{Comp}(C-\{i\})} x_{(V(C'),T\cup \{i\})}X^{\bm a} \ \ \text{ in $\K[P_G]$.}
        \end{align}
        \item[(3)] If $i \in [\ell]\setminus(T \cup W)$, then $(W,T\cup \{i\}) \in P_G $ and 
        $ X_i(x_{(W,T)}X^{\bm a})=x_{(W,T\cup \{i\})}X^{\bm a}$ in $\K[P_G]$.
    \end{enumerate}
\end{lemma}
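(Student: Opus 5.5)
Write $\sigma=(W,T)$ throughout. The plan is to expand $X_i=\sum_{\supp(v)=\{i\}}x_v$ and compute each product $x_\sigma x_v$ with the face-ring relations. These say that $x_\sigma x_v=0$ if $\sigma$ and $v$ have no common upper bound, and $x_\sigma x_v=x_{\sigma\wedge v}\sum_\rho x_\rho$ otherwise, where $\rho$ runs over the minimal common upper bounds. The factor $X^{\bm a}$ just rides along. In cases (2) and (3) the resulting terms are $x_\rho X^{\bm a}$ with $T\subset\supp(\rho)$, so $\bm a\in\Z^\ell_{\supp(\rho)}$ and these terms lie in $\mathrm{Base}(P_G)$.

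\textbf{Case (1).} If $i\in T$, then by the proof of Lemma \ref{lem:Pbasis} we have $x_\sigma X_i=x_\sigma x_{v}$, where $v$ is the vertex of $\sigma$ with support $\{i\}$. Commutativity then gives $X_ix_\sigma X^{\bm a}=x_\sigma X^{\bm a+\bm e_i}$. This lies in $\mathrm{Base}(P_G)$ because $\supp(\bm a+\bm e_i)\subset T$.

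\textbf{Common upper bounds for $i\notin T$.} By Lemma \ref{graphposet}(1), an upper bound $(W',T')$ of $\sigma$ and of a vertex $v=(U,\{i\})$ has $i\in T'$, $T\subset T'$ and $W'\subset W\cap U$. Moreover $v$ is then forced to be the unique element $(W'_{\{i\}},\{i\})$ below it, namely the component of $G-\{i\}$ containing $W'$. The meet of $\sigma$ and $v$ is $\hat 0$, since the only element below both is $([\ell],\emptyset)$ (their supports meet in $\emptyset$). So for each $v$,
\[
x_\sigma x_v=\sum_\rho x_\rho ,
\]
where $\rho$ ranges over the minimal common upper bounds. Candidates are $\rho=(V(C'),T\cup\{i\})$, with $C'$ a component of $G-(T\cup\{i\})$ lying inside $W$ and inside $U$. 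Any common upper bound $(W',T')$ lies above one of these: take $C'$ to be the component of $G-(T\cup\{i\})$ containing $W'$. This $C'$ lies in $W$, because it is connected, avoids $T$, and meets $W$. Similarly it lies in $U$. The candidates are pairwise incomparable since they have the same support. Hence the candidates are exactly the minimal upper bounds.

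\textbf{Case (2).} If $i\in W$, summing over $v$ collects every component $C'$ of $G-(T\cup\{i\})$ inside $W$. Each such $C'$ lies in exactly one $U$, namely its component in $G-\{i\}$. These $C'$ are precisely the components of $C-\{i\}$, giving \eqref{kakeru}.

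\textbf{Case (3).} If $i\notin T\cup W$, then $W$ is still a full connected component of $G-(T\cup\{i\})$: removing a vertex outside $W$ does not split $C$, and $C$ stays maximal. So $(W,T\cup\{i\})\in P_G$, and it is the unique candidate, lying in the unique $U\supset W$. This gives $x_{(W,T\cup\{i\})}X^{\bm a}$.

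\textbf{Main obstacle.} The delicate step is identifying the minimal upper bounds and confirming the meet is $\hat 0$. The key point is that any upper bound refines to a component of $G-(T\cup\{i\})$, using connectivity of $W'$ and Lemma \ref{graphposet}(1).
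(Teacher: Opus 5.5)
Your proposal is correct and follows essentially the paper's route: you expand $X_i$ into vertex variables, note that $\sigma\wedge v=\hat 0$ so each product $x_\sigma x_v$ is the sum of the minimal common upper bounds, and observe that these are exactly the elements of support $T\cup\{i\}$ above $\sigma$, each arising from exactly one vertex $v$. Your identification of the minimal upper bounds via components of $G-(T\cup\{i\})$ just makes explicit the support claim the paper states without proof. In part (1) the appeal to Lemma \ref{lem:Pbasis} is unnecessary, since $X^{\bm a}X_i=X^{\bm a+\bm e_i}$ holds by definition.
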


\begin{proof}
Part (1) is obvious. We prove (2) and (3).
Assume $i \not \in T$.
Let
\[\Xi=\big\{(W',T')\in P_G \mid (W',T') \geq (W,T) \text{ and }T'=T \cup \{i\}\big\}.
\]
By Lemma \ref{graphposet},
for $\sigma,\tau \in P_G$,
if $\rho$ is a minimal common upper bound of $\sigma$ and $\tau$ then
\[
\mathrm{supp}(\rho)=\mathrm{supp}(\sigma) \cup \mathrm{supp}(\tau)\ \ \ 
\mbox{ and } \ \ \
\mathrm{supp}(\sigma \wedge \tau)=\mathrm{supp}(\sigma) \cap \mathrm{supp}(\tau).
\]
Recall that $([\ell],\emptyset)$ is the only element of $P_G$ whose support is $\emptyset$ and $x_{([\ell],\emptyset)}=1$ in $\K[P_G]$.
Then, for every vertex $u=(U,\{i\})$ with support $\{i\}$,
the meet $(W,T) \wedge u$ is the minimal element $([\ell],\emptyset)$,
and  in $\K[P_G]$ the product $x_u x_{(W,T)}$ is 
either zero or a sum of variables $x_\tau$ with $\tau \in \Xi$ by the definition of $I_{P_G}$ (consider a generator of $I_{P_G}$ of the third type).
Also, for each $(W',T') \in \Xi$, there is a unique vertex $v$ such that $x_{(W',T')}$ appears in the product $x_v x_{(W,T)}$ (indeed, $v$ is the vertex of $(W',T')$ with support $\{i\}$).
Since $X_i$ is the sum of the variables $x_u$ with $\supp(u)=\{i\}$,
this proves
\[
X_i\ \! x_{(W,T)}=\sum_{(W',T') \in \Xi} x_{(W',T')}\ \ \text{ in }\K[P_G].
\]
On the other hand, by the definition of $P_G$,
\[
\Xi=
\begin{cases}
    \big\{(V(C'),T\cup\{i\}) \mid  C' \in \mathrm{Comp}(C-\{i\})\big\} & \text{ if }i \in W,\\
\big\{(V(C),T \cup \{i\})\big\}  & \text{ if } i \notin W.
\end{cases}
\]
This proves the desired statement.
\end{proof}

\begin{rem}
\label{structureS}
One may feel that the definition of face ring of simplicial posets is complicated, but the $\K[X_1,\dots,X_\ell]$-module structure of $\K[P_G]$ is relatively simple.
Lemmas \ref{lem:Pbasis} and \ref{lem:obvious} explain how the combinatorial data of $P_G$ determine such a module structure.
\end{rem}

\subsection*{A basis for $D_R(\A_G)$}
Next, we give a $\K$-basis for $D_R(\A_G)$ that is analogous to the basis in Lemma \ref{lem:Pbasis}.
For a subset $T \subsetneq [\ell]$ and $W \subset [\ell]\setminus T$,
we define $\theta_{W,T}$ by
\[
\theta_{W,T}= \sum_{v \in W} \left( \prod_{t \in T} (x_t-x_v) \right) \partial_{x_v} \in \Der(S).\]
Also, for a vector $\bm a=(a_1,\dots,a_\ell) \in \Z^\ell_T$ we define
\[ \theta^{\bm a}_{W,T}= \sum_{v \in W} \left( \prod_{t \in T} (x_t-x_v)^{1+a_t} \right) \partial_{x_v} \in \Der(S).\]
Derivations of the form $\theta_{W,T}$ were used by Suyama and Tsujie \cite{ST} to construct an explicit basis of $D(\A_G)$ when $\A_G$ is free (see also \cite[Theorem 4.1]{NTUY}).
M\"uhlherr considered $\theta_{W,T}$ for any graph $G$ and any separator $T$ of $G$ to study generating sets of $D(\A_G)$.
The following lemma is a slight generalization of \cite[Lemma 3.4]{Mu}.
\begin{lemma}
    \label{lem:containment}
Let $G$ be a graph with vertex set $[\ell]$.
If $(W,T) \in P_G$, then
\[ \theta_{W,T}^{\bm a} \in D_R(\A_G) \ \ \text{ for all $\bm a \in \Z^\ell_T$}.\]
\end{lemma}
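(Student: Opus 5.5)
We need to show $\theta^{\bm a}_{W,T}$ has coefficients in $R$ and satisfies $\theta(x_u-x_v)\in(x_u-x_v)$ for every edge $\{u,v\}$ of $G$. The first point is immediate: each coefficient $\prod_{t\in T}(x_t-x_v)^{1+a_t}$ is a product of differences $x_t-x_v$, so it lies in $R$. For the second point, write $f_v=\prod_{t\in T}(x_t-x_v)^{1+a_t}$ for $v\in W$ and $f_v=0$ for $v\notin W$. Then $\theta^{\bm a}_{W,T}(x_u-x_v)=f_u-f_v$, and we check membership in $(x_u-x_v)$ by splitting edges $\{u,v\}$ according to how they meet $W$.

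If $u,v\in W$, then $f_u-f_v=g(x_u)-g(x_v)$ for the polynomial $g(y)=\prod_{t\in T}(x_t-y)^{1+a_t}$ in a new variable $y$ with coefficients in $\K[x_t\mid t\in T]$. Since $u,v\notin T$, substituting $y=x_u$ and $y=x_v$ gives $g(x_u)-g(x_v)\in(x_u-x_v)$ by the standard identity $y^k-z^k=(y-z)(\cdots)$. If $u,v\notin W$, both $f_u$ and $f_v$ vanish.

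The remaining case is $u\in W$, $v\notin W$, so the value is $f_u$. Here we use $(W,T)\in P_G$: $W=V(C)$ for a connected component $C$ of $G-T$. If $v\notin T$, then $v$ is a vertex of $G-T$ adjacent to $u\in V(C)$, so $v\in V(C)=W$, a contradiction. Hence $v\in T$, and $f_u$ contains the factor $(x_v-x_u)^{1+a_v}$, so $f_u\in(x_u-x_v)$.

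This last case is the only place the hypothesis $(W,T)\in P_G$ is used, and it is the main (though mild) point: the component condition forces every edge leaving $W$ to land in $T$. The exponents $1+a_t\geq 1$ guarantee that each factor for $t\in T$ is actually present. Combining the three cases gives $\theta^{\bm a}_{W,T}\in D(\A_G)\cap\bigoplus_i R\,\partial_{x_i}=D_R(\A_G)$.
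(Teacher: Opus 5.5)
Your proof is correct and follows essentially the same route as the paper's: coefficients in $R$, then the same case split on each edge $\{u,v\}$ according to how it meets $W$. As in the paper, the component condition forces an edge leaving $W$ to end in $T$, and for $u,v\in W$ the difference of values vanishes modulo $x_u-x_v$.
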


\begin{proof}
The coefficient of $\partial_{x_v}$ in $\theta_{W,T}^{\bm a}$ belongs to $R$.
Hence it suffices to prove $\theta_{W,T}^{\bm a} \in D(\A_G)$.
We prove $\theta_{W,T}^{\bm a} (x_u-x_v) \in (x_u-x_v)$ for every edge $\{u,v\}$ of $G$.
Since $\theta_{W,T}^{\bm a}(x_u-x_v)=0$ if $u,v \not \in W$,
we may assume $u \in W.$

Suppose $v \notin W$. Since $\{u,v\}$ is an edge of $G$ and $W$ is the vertex set of a connected component of $G-T$, we must have $v \in T$.
Then
\[ \theta_{W,T}^{\bm a} (x_u-x_v)= \prod_{t \in T} (x_t-x_u)^{1+a_t}\]
is divisible by $x_u-x_v$, as desired.

Suppose $v \in W$. Then
\[ \theta_{W,T}^{\bm a} (x_u-x_v) = \left(\prod_{t \in T} (x_t-x_u)^{1+a_t}\right) - \left( \prod_{t \in T} (x_t-x_v)^{1+a_t} \right).\]
This polynomial is zero modulo $x_u-x_v$, so 
$\theta_{W,T}^{\bm a} (x_u-x_v) \in (x_u-x_v)$ as desired.
\end{proof}

For $\theta=\sum_{k=1}^\ell f_k\partial_{x_k} \in \Der(S)$
and for $i=1,2,\dots,\ell$,
we define the element $x_i \circ \theta \in \Der(S)$ by
\[
x_i \circ \theta=\sum_{k=1}^\ell (x_i-x_k)f_k\partial_{x_k}.
\]
We note that for $1\leq i <j \leq \ell$ one has \[x_i \circ \theta -x_j \circ \theta=(x_i-x_j) \theta.\]
The next statement is an analogue of Lemma \ref{lem:obvious} for $D(\A_G)$.

\begin{lemma}
\label{technical}
Let $G$ be a graph with vertex set $[\ell]$,
$(W,T) \in P_G$, $\bm a \in \Z_T^\ell$, and let $C$ be the connected component of $G-T$ such that $V(C)=W$.
\begin{enumerate}
    \item If $i \in T$, then  $x_i \circ \theta_{W,T}^{\bm a} =  \theta_{W,T}^{\bm a+ \bm e_i}$.
    \item If $i \in W$, then one has 
    \[x_i \circ \theta_{W,T}^{\bm a} = \sum_{C'\in \mathrm{Comp}(C- \{i\})} \theta_{V(C'),T \cup \{i\}}^{\bm a}.\]
    \item If $i \in [\ell]\setminus(T \cup W)$, then $(W,T\cup\{i\}) \in P_G$ and $x_i \circ \theta_{W,T}^{\bm a}=\theta_{W,T \cup \{i\}}^{\bm a}$.
    \end{enumerate}
\end{lemma}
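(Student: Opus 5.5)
The lemma is a direct computation from the definitions of $\theta^{\bm a}_{W,T}$ and of the operation $x_i\circ$. For any $\theta=\sum_k f_k\partial_{x_k}$, the coefficient of $\partial_{x_v}$ in $x_i\circ\theta$ is $(x_i-x_v)f_v$. Applied to $\theta^{\bm a}_{W,T}$, this gives
\[
x_i\circ\theta^{\bm a}_{W,T}=\sum_{v\in W}(x_i-x_v)\Big(\prod_{t\in T}(x_t-x_v)^{1+a_t}\Big)\partial_{x_v}.
\]
I will compare this expression coefficientwise with the right-hand sides in the three cases, and separately verify the membership claim in (3).

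In case (1), $i\in T$, so the factor $(x_i-x_v)$ raises the exponent of $(x_i-x_v)$ from $1+a_i$ to $2+a_i$. The right-hand side is therefore exactly $\theta^{\bm a+\bm e_i}_{W,T}$, and $\bm a+\bm e_i\in\Z^\ell_T$ because $i\in T$.

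In case (3), $i\notin T\cup W$, so the displayed sum is $\sum_{v\in W}\prod_{t\in T\cup\{i\}}(x_t-x_v)^{1+a_t}\partial_{x_v}$. Here $a_i=0$ since $\bm a\in\Z^\ell_T$, and $\bm a\in\Z^\ell_{T\cup\{i\}}$, so this is $\theta^{\bm a}_{W,T\cup\{i\}}$. It remains to check $(W,T\cup\{i\})\in P_G$. First, $T\cup\{i\}\neq[\ell]$ because $W\neq\emptyset$ is disjoint from it. Second, $C$ is connected and avoids $i$, so $C$ is a connected subgraph of $G-(T\cup\{i\})$. It is also a full component: any edge of $G-(T\cup\{i\})$ leaving $W$ would be an edge of $G-T$ leaving $C$, which cannot exist.

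In case (2), $i\in W$, so the $v=i$ term vanishes and the sum runs over $v\in W\setminus\{i\}$. Since $a_i=0$, each remaining coefficient equals $\prod_{t\in T\cup\{i\}}(x_t-x_v)^{1+a_t}$. The vertex set $W\setminus\{i\}$ is the disjoint union of $V(C')$ over $C'\in\mathrm{Comp}(C-\{i\})$, so splitting the sum accordingly gives $\sum_{C'}\theta^{\bm a}_{V(C'),T\cup\{i\}}$, as claimed. The one point needing care is the case $W=\{i\}$: then $C-\{i\}$ has no components and both sides are $0$. Also, each $(V(C'),T\cup\{i\})$ lies in $P_G$, since the components of $C-\{i\}$ are components of $G-(T\cup\{i\})$ by the same argument as in (3). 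There is no real obstacle; the only step requiring attention is this bookkeeping of the vertex partition and the consistency $a_i=0$.
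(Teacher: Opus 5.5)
Your proof is correct and follows the same route as the paper: you compute $x_i\circ\theta^{\bm a}_{W,T}$ coefficientwise, observe that the $v=i$ term vanishes and that $a_i=0$, and split $W\setminus\{i\}$ along the vertex sets of the components of $C-\{i\}$. You also write out the membership checks, such as $(W,T\cup\{i\})\in P_G$, and the degenerate case $W=\{i\}$, which the paper leaves as straightforward.
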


\begin{proof}
Parts (1) and (3) are straightforward. We prove (2).
Assume $i \in W$.
By the definition of $\theta_{W,T}^{\bm a}$, we have
\[
x_i \circ \theta_{W,T}^{\bm a}=\theta_{W \setminus \{i\},T \cup \{i\}}^{\bm a}.
\]
Since $W\setminus \{i\} = \bigcup_{C' \in \mathrm{Comp}(C-\{i\})}(V(C'))$ is a partition of $W \setminus \{i\}$,
the above element is equal to the right-hand side of (2).
\end{proof}

Recently, M\"uhlherr \cite{Mu} found an explicit generating set of $D(\A_G)$.
We recall this result.
Let $\theta_i=\sum_{k=1}^\ell x_k^i \partial_{x_k}$
for $i=0,1,\dots,\ell-1$.
The connectivity of a graph $G$ is the largest integer $k$ such that $G$ is $k$-connected.

\begin{lemma}[{M\"uhlherr \cite[Theorem 1.3]{Mu}}]
\label{mulherr}
If a graph $G$ with vertex set $[\ell]$ has connectivity $k>0$,
then
\[\{\theta_0,\theta_1,\dots,\theta_{k}\} \cup \{ \theta_{W,T} \mid (W,T) \in P_G, \ T \text{ is a separator of }G\}\]
generates $D(\A_G)$.
\end{lemma}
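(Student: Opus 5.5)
The statement is M\"uhlherr's theorem, so in the paper it is simply quoted. If I had to prove it, I would argue by induction on the number of non-edges of $G$. The base case is $G=K_\ell$. There $D(\A_{K_\ell})$ is the free module with basis $\theta_0,\dots,\theta_{\ell-1}$ (Saito's criterion via the Vandermonde determinant), and $K_\ell$ has no separators and connectivity $\ell-1$, so the statement holds. Write $N(G)$ for the submodule generated by the proposed set. Every element of that set lies in $D(\A_G)$: for $\theta_{W,T}$ this is Lemma~\ref{lem:containment}, and the $\theta_i$ lie in $D(\A_{K_\ell})\subset D(\A_G)$. So the content is the inclusion $D(\A_G)\subset N(G)$.

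For the inductive step, pick a non-edge $e=\{u,v\}$ and set $G'=G+e$. Consider
\[0\to D(\A_{G'})\to D(\A_G)\xrightarrow{\ \rho\ } S/(x_u-x_v),\qquad \rho(\theta)=\theta(x_u-x_v)\bmod (x_u-x_v).\]
It suffices to prove two things: $D(\A_{G'})\subset N(G)$, and $\rho(D(\A_G))$ is generated by the images under $\rho$ of the $\theta_{W,T}$. For the first, every separator of $G'$ is a separator of $G$ whose components refine those in $G'$. Hence each $\theta_{W',T'}$ for $G'$ is a sum of $\theta_{W,T'}$ over the components $W$ of $G-T'$ inside $W'$. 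Also $G$ and $G'$ have the same connectivity unless $G'$ is more connected, and then the extra $\theta_i$ have to be produced from $\theta_{W,T}$ with $|T|$ equal to the connectivity of $G$. I would check this last point by summing $\theta_{W,T}$ over all components $W$ of $G-T$, which gives $\prod_{t\in T}(x_t-x_v)$-type fields agreeing with an $S$-combination of $\theta_0,\dots,\theta_{|T|}$ up to terms supported on $T$.

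The core step is computing the image of $\rho$. Take $\theta=\sum f_k\partial_{x_k}\in D(\A_G)$ and let $g=f_u-f_v$ restricted to the hyperplane $x_u=x_v$. For any $u$--$v$ path $u=p_0,\dots,p_m=v$ in $G$, telescoping gives
\[g=\sum_j \bigl(f_{p_j}-f_{p_{j+1}}\bigr),\]
and each summand is divisible by $x_{p_j}-x_{p_{j+1}}$. Hence $g$ vanishes on the linear subspace where all $x_{p_j}$ coincide. Therefore $g$ lies in the ideal of the subspace arrangement indexed by $u$--$v$ paths inside $\{x_u=x_v\}$. Conversely, for a $u$--$v$ separator $T$ with $W$ the component of $G-T$ containing $u$, we have $\rho(\theta_{W,T})=\prod_{t\in T}(x_t-x_u)$.

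The main obstacle is the Menger-type algebraic statement: the ideal of the union of these path subspaces is generated by the products $\prod_{t\in T}(x_t-x_u)$, with $T$ running over minimal $u$--$v$ separators. The inclusion "$\supseteq$" is clear, since each path meets every separator. For "$\subseteq$" I would first try induction on $|V(G)|$, splitting according to whether $g$ vanishes at $x_w=x_u$ for a fixed neighbour $w$ of $u$ (contract $w$ into $u$ versus delete $w$). I would use the factorization lemma: if $g$ vanishes on $x_w=x_u$, then $g=(x_w-x_u)g'$, and $g'$ satisfies the analogous vanishing for $G-w$. Once this lemma is established, lifting generators of the image and applying the first point completes the induction.
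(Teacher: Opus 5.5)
The paper does not prove this lemma (it is quoted from M\"uhlherr), so there is no internal argument to compare with. Your deletion strategy, via $0\to D(\A_{G+e})\to D(\A_G)\xrightarrow{\rho}S/(x_u-x_v)$ and induction on non-edges, is sound. As written, however, the proof has two holes.

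First, the step you call the main obstacle is never proved; you only say how you would attempt it. It is in fact immediate and needs neither a Menger-type argument nor a contraction--deletion induction. Put $y_w=x_w-x_u$ for $w\neq u$, so $S=\K[x_u,y_w\mid w\neq u]$. Your telescoping shows $f_u-f_v\in I_P=(y_w\mid w\in P\setminus\{u\})$ for every $u$--$v$ path $P$, and each $I_P$ is a monomial prime. An intersection of monomial primes is the squarefree monomial ideal generated by the $\prod_{t\in T}y_t$ with $T$ meeting every $P\setminus\{u\}$. Such a $T$ either contains $v$ or is a $u$--$v$ separator, because a set avoiding $u,v$ meets every $u$--$v$ path if and only if it separates $u$ from $v$. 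Hence
\[
f_u-f_v\in\bigcap_P I_P=(x_v-x_u)+\Bigl(\,\prod_{t\in T}(x_t-x_u)\ \Bigm|\ T\text{ a minimal $u$--$v$ separator}\Bigr).
\]
Moreover, $\rho(\theta_{W,T})$ is the class of $\prod_{t\in T}(x_t-x_u)$ when $W$ is the component of $G-T$ containing $u$. So these separator fields generate $\rho(D(\A_G))$.

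Second, your treatment of the case where $G+e$ has connectivity $k+1$ does not work. For $|T|=k$, the sum of the $\theta_{W,T}$ over the components of $G-T$ is exactly $\sum_{s=1}^{\ell}\prod_{t\in T}(x_t-x_s)\,\partial_{x_s}$, since the terms with $s\in T$ vanish. This is an $S$-combination of $\theta_0,\dots,\theta_k$ with coefficient $\pm1$ on $\theta_k$, so it cannot produce the new generator $\theta_{k+1}$. You need some $T$ with $|T|=k+1$ and $\theta_{[\ell]\setminus T,T}\in N(G)$.
\begin{itemize}
\item Take a separator $T_0$ of $G$ with $|T_0|=k$. Since $G+e$ is $(k+1)$-connected, $G-T_0$ has exactly two components $A\ni u$ and $B\ni v$.
\item If, say, $|A|\ge 2$, then $T=T_0\cup\{w\}$ with $w\in A$ is a separator of $G$. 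Summing the $\theta_{W,T}$ over the components of $G-T$ gives $\pm\theta_{k+1}$ modulo $S\theta_0+\dots+S\theta_k$.
\item If $|A|=|B|=1$, then $G=K_\ell-e$, and $\prod_{t\neq u}(x_t-x_u)\,\partial_{x_u}=(x_v-x_u)\,\theta_{\{u\},T_0}\in N(G)$ plays the same role.
\end{itemize}

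With these two additions your induction closes. Running the same induction with the set $\{\theta_{W,T}\mid (W,T)\in P_G\}$ sidesteps the second issue entirely and proves Corollary~\ref{cor:generator} directly, whereas the paper deduces that corollary from M\"uhlherr's theorem.
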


The following statement is a minor variation of M\"uhlherr's result,
and its proof essentially appears in \cite[Corollary 3.13 and Theorem 3.14]{Mu}.

\begin{cor}
\label{cor:generator}
Let $G$ be a graph with vertex set $[\ell]$. 
Then the set
\[ \{\theta_{W,T} \mid (W,T) \in P_G\}\]
generates $D(\A_G)$.
\end{cor}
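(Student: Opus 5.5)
The plan is to reduce to M\"uhlherr's generating set in Lemma \ref{mulherr}. Since $\theta_{W,T}\in D(\A_G)$ by Lemma \ref{lem:containment} (with $\bm a=\bm 0$), it suffices to show that each generator listed in Lemma \ref{mulherr} lies in the $S$-submodule $N$ generated by $\{\theta_{W,T}\mid (W,T)\in P_G\}$. The separator derivations $\theta_{W,T}$ already belong to the set. So, for a graph of connectivity $k>0$, the real work is to show that $\theta_0,\dots,\theta_k\in N$. Graphs that are disconnected, or have only one vertex, will be handled separately at the end.

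The key identity I will use is the following. For any $T\subsetneq[\ell]$, the sets $V(C)$ with $C\in\mathrm{Comp}(G-T)$ partition $[\ell]\setminus T$. Moreover, for $v\in T$ the product $\prod_{t\in T}(x_t-x_v)$ vanishes. Hence
\[
\sum_{C\in \mathrm{Comp}(G-T)} \theta_{V(C),T}=\sum_{v=1}^{\ell}\Big(\prod_{t\in T}(x_t-x_v)\Big)\partial_{x_v}
=\sum_{m=0}^{|T|}(-1)^m e_{|T|-m}(x_T)\,\theta_m ,
\]
where $e_j(x_T)$ is the $j$-th elementary symmetric polynomial in $\{x_t\mid t\in T\}$. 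The second equality comes from expanding the product as a polynomial in $x_v$. The leading term is $\pm\theta_{|T|}$, with coefficient a unit.

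Now induct on $j$ for $0\le j\le k$, where $k\le \ell-1$. For $j=0$, take $T=\emptyset$; then $([\ell],\emptyset)\in P_G$ and $\theta_{[\ell],\emptyset}=\theta_0$. For $j\ge1$, choose any $T$ with $|T|=j$; this is a proper subset because $j\le \ell-1$. The left side of the identity lies in $N$, and $\theta_0,\dots,\theta_{j-1}\in N$ by induction, so $\theta_j\in N$. Combined with Lemma \ref{mulherr}, this settles every graph of connectivity $k>0$. It does not matter whether $T$ is a separator, since the identity holds for every $T$.

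It remains to treat a disconnected $G$ with components $G_1,\dots,G_r$. Here $\A_G$ is a product of the arrangements $\A_{G_i}$, each in the coordinates $V(G_i)$. So $D(\A_G)$ is generated by the $S$-extensions of the modules $D(\A_{G_i})$, each viewed inside $\Der$ in the variables of $V(G_i)$. This product decomposition of $D$ is the standard fact I expect to need to justify briefly.
\begin{itemize}
\item If $|V(G_i)|=1$, say $V(G_i)=\{v\}$, then $D(\A_{G_i})$ is generated by $\partial_{x_v}=\theta_{\{v\},\emptyset}$, and $(\{v\},\emptyset)\in P_G$.
\item Otherwise $G_i$ is connected with connectivity $>0$, and the previous paragraphs apply to it. I then check that each $(W',T')\in P_{G_i}$ yields $(W',T')\in P_G$: when $T'\subsetneq V(G_i)$, $W'$ is still a connected component of $G-T'$. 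The derivation $\theta_{W',T'}$ is literally the same in both settings.
\end{itemize}
The main obstacle is simply this bookkeeping for disconnected graphs; the core algebraic step is the triangular expansion above.
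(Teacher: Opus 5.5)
Your proposal is correct and follows the paper's route. You reduce to M\"uhlherr's generating set (Lemma \ref{mulherr}) and put $\theta_0,\dots,\theta_k$ into the submodule by induction, using the triangular expansion of $\sum_{C\in\mathrm{Comp}(G-T)}\theta_{V(C),T}$ in the $\theta_m$ and treating one-vertex and disconnected graphs via connected components; the only differences are that the paper fixes $T=[k]$ and goes all the way to $\theta_{\ell-1}$, while you allow any $T$ of size $j$ and spell out that elements of $P_{G_i}$ lie in $P_G$ with the same $\theta_{W',T'}$.
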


\begin{proof}
If $\ell=1$ then the graph has no edges and $D(\A_G)=\Der(\K[x_1])$ is generated by $\partial_{x_1}=\theta_{\{1\},\emptyset}$.
Thus we may assume $\ell \geq 1$.
If $G$ is not connected, then $D(\A_G)$ is generated by the union of generators of vector fields of connected components of $G$.
Hence by the definition of $\theta_{W,T}$, to prove the statement, we may assume that $G$ is connected.
Let $D$ be the submodule of $D(\A_G)$ generated by $\{\theta_{W,T} \mid (W,T) \in P_G\}$.
By Lemma \ref{mulherr},
it suffices to prove $\theta_0,\theta_1,\dots,\theta_{\ell-1} \in D$.

We prove $\theta_k \in D$ using induction on $k$.
When $k=0$, we have $\theta_0=\theta_{[\ell],\emptyset} \in D$ since $([\ell],\emptyset) \in P_G$ by our assumption that $G$ is connected.
Assume $k>0$ and $\theta_0,\dots,\theta_{k-1} \in D$.
Observe that  for any $s \in [\ell]$ one has
\[\prod_{t=1}^k(x_t-x_s)=\sum_{i=0}^k (-1)^{i-k} e_i(x_1,\dots,x_k) x_s^{k-i}, 
\]
where $e_i(x_1,\dots,x_k)$ is the $i$th elementary symmetric function in variables $x_1,\dots,x_k$.
By the above equation, we have
\begin{align}
\label{2.10.1}    
\sum_{i=0}^k (-1)^{i-k} e_i(x_1,\dots,x_k) \theta_{k-i}=
\sum_{t=1}^\ell \left(\prod_{s=1}^k (x_s-x_t) \right)\partial_{x_t}=\theta_{[\ell]\setminus [k],[k]}.
\end{align}
Since the vertex sets of connected components of $G-[k]$ partition $[\ell]\setminus [k]$, we have
\[
\theta_{[\ell]\setminus [k],[k]}=
\sum_{C \in \mathrm{Comp(G-\setminus [k])}} \theta_{V(C),[k]}  \in D.
\]
Hence, by \eqref{2.10.1},
$\theta_k$ can be written as a $S$-linear combination of $\theta_0,\dots,\theta_{k-1}$ modulo $D$.
By the induction hypothesis, this proves $\theta_k \in D$ as desired.
\end{proof}

\begin{prop}
\label{prop:Kbasis}
Let $G$ be a graph with vertex set $[\ell]$.
Then
\begin{align}
\label{eq:Kbasis}
\{ \theta_{W,T}^{\bm a} \mid (W,T) \in P_G,\ \bm a \in \Z_T^\ell\}
\end{align}
is a $\K$-basis of $D_R(\A_G)$.
\end{prop}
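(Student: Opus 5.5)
We must show two things: that the set \eqref{eq:Kbasis} spans $D_R(\A_G)$ over $\K$, and that it is linearly independent. By Lemma \ref{lem:containment} every $\theta^{\bm a}_{W,T}$ lies in $D_R(\A_G)$. As in the proof of Corollary \ref{cor:generator}, the case of disconnected $G$ reduces to the connected case: $D_R$ splits along components, and elements of $P_G$ for disconnected $G$ are built from those of components. So we assume $G$ is connected.

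\emph{Spanning.} Let $M$ be the $\K$-span of \eqref{eq:Kbasis}. The first step is to show that $M$ is an $R$-submodule, i.e.\ that $M$ is closed under multiplication by $x_i-x_j$. Since $(x_i-x_j)\theta = x_i\circ\theta - x_j\circ\theta$, it suffices to show $x_i\circ M\subset M$ for each $i$. This is exactly Lemma \ref{technical}: in each of its three cases, $x_i\circ\theta^{\bm a}_{W,T}$ is again a sum of elements of \eqref{eq:Kbasis}.

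Next, $M$ contains $\theta_{W,T}=\theta^{\bm 0}_{W,T}$ for all $(W,T)\in P_G$. These generate $D(\A_G)$ over $S$ by Corollary \ref{cor:generator}. The remaining point is to pass from $S$-generation to $R$-generation of $D_R(\A_G)$. We write $S=R[x_1]$, a polynomial ring in $x_1$ over $R$. Given $\theta\in D_R(\A_G)$, write $\theta=\sum_\sigma f_\sigma\theta_\sigma$ with $f_\sigma\in S$, and expand each $f_\sigma=\sum_k g_{\sigma,k}x_1^k$ with $g_{\sigma,k}\in R$. Since each $\theta_\sigma$ has coefficients in $R$, comparing $x_1$-degree-$0$ parts gives $\theta=\sum_\sigma g_{\sigma,0}\theta_\sigma$. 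This uses that $\bigoplus R\,\partial_{x_i}\otimes_R S=\Der(S)$ and that $S$ is free over $R$ with basis $1,x_1,x_1^2,\dots$. Hence $D_R(\A_G)$ is generated over $R$ by the $\theta_{W,T}$, and therefore $D_R(\A_G)=M$.

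\emph{Linear independence.} We use a leading-term argument. For $(W,T)$ and $\bm a\in\Z^\ell_T$, the coefficient of $\partial_{x_v}$ in $\theta^{\bm a}_{W,T}$, for $v\in W$, is $\prod_{t\in T}(x_t-x_v)^{1+a_t}$. This polynomial determines $T$ and $\bm a$: its factorization into linear forms is unique up to scalars. Moreover $W$ is recovered from $(T,v)$ as the component of $G-T$ containing $v$.

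Suppose a finite nontrivial relation $\sum c_{W,T,\bm a}\theta^{\bm a}_{W,T}=0$ holds. Fix $v$ and look at the $\partial_{x_v}$-coefficient. The terms involving $v$ are those with $v\in W$, and for each pair $(T,\bm a)$ with $v\notin T$ there is exactly one such $W$. So we need the polynomials $\prod_{t\in T}(x_t-x_v)^{1+a_t}$, for distinct pairs $(T,\bm a)$, to be $\K$-linearly independent. To see this, substitute $y_t=x_t-x_v$. The polynomial becomes the monomial $\prod_{t\in T}y_t^{1+a_t}$ in the independent variables $y_t$, $t\neq v$. Distinct pairs $(T,\bm a)$ give distinct monomials, which are independent. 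Every basis element has some $v\in W$, since $W\neq\emptyset$ as a component. Hence all coefficients $c_{W,T,\bm a}$ vanish.

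The main obstacle is the spanning step, specifically descending from $S$-generation to $R$-generation. The plan is to use the freeness of $S$ over $R$ together with the $R$-rationality of the $\theta_{W,T}$, and to check carefully that degree-$0$ extraction in $x_1$ respects the module structure.
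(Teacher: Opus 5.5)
Your proposal is correct and follows essentially the same route as the paper. For spanning, you show that the $\K$-span is closed under each $x_i\circ$ via Lemma~\ref{technical} and then invoke Corollary~\ref{cor:generator}; for independence, you read off the $\partial_{x_v}$-coefficient in the coordinates $y_t=x_t-x_v$, where the terms become distinct monomials. Your $x_1$-degree-zero argument (using $S=R[x_1]$) usefully makes explicit the passage from $S$-generation of $D(\A_G)$ to $R$-generation of $D_R(\A_G)$, which the paper leaves implicit.

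The opening reduction to connected $G$ is unnecessary and only loosely justified: for disconnected $G$, the ring $R$ contains differences across components and a support $T$ may meet several components. None of your later steps uses connectedness, so you can simply drop it.
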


\begin{proof}
We first prove that the set in \eqref{eq:Kbasis} spans $D_R(\A_G)$ as a $\K$-vector space.
By Corollary \ref{cor:generator}, to prove this, it suffices to show that for any homogeneous polynomial $f \in R$ the element $f \theta_{W,T}$ with $(W,T) \in P_G$ can be written as a $\K$-linear combination of elements in \eqref{eq:Kbasis}.
To prove this, by using induction on the degree of $f$, it is enough to show that
$(x_i-x_j) \theta_{W,T}^{\bm a}$
is a $\K$-linear combination of elements in \eqref{eq:Kbasis}
for $(W,T) \in P_G$ and $\bm a \in \Z^\ell_T$.
This last statement follows from Lemma \ref{technical} since
\begin{align*}
(x_i-x_j) \theta_{W,T}^{\bm a}
=
x_i\circ \theta_{W,T}^{\bm a}-x_j \circ\theta_{W,T}^{\bm a}
\end{align*}
and 
both $x_i\circ \theta_{W,T}^{\bm a}$
and $x_j\circ \theta_{W,T}^{\bm a}$ are $\K$-linear combinations of elements in the set \eqref{eq:Kbasis}.

It remains to prove that the set in \eqref{eq:Kbasis} is $\K$-linearly independent.
Consider the equation
\begin{align}
    \label{eq:element}
    \sum_{(W,T) \in P_G,\ \! \bm a \in \Z^\ell_T} c_{(W,T)}^{\bm a} \theta_{W,T}^{\bm a}=0,
\end{align}
where only finitely many of the coefficients $c_{(W,T)}^{\bm a} \in \K$ are nonzero.
We claim $c_{(W,T)}^{\bm a}=0$ for $(W,T) \in P_G$ and $\bm a \in \Z_T^\ell$.

It is enough to prove the claim for $(W,T) \in P_G$ and $\bm a \in \Z_T^\ell$ with $1 \in W$.
By the definition of $P_G$,
for every subset $T \subsetneq [\ell]$ with $1 \notin T$ there is a unique subset $W_T \subset [\ell]\setminus T$ such that $1 \in W_T$ and $(W_T,T) \in P_G$
(indeed, $W_T$ is the vertex set of the connected component of $G-T$ containing the vertex $1$).
Set $y_k=x_k-x_1$ for $k=2,3,\dots,\ell$.
By considering the coefficient of $\partial_{x_1}$ in \eqref{eq:element}, we obtain
\[
\sum_{1\not \in T \subsetneq [\ell]}\left(\sum_{\bm a \in \Z^\ell_T}c_{(W_T,T)}^{\bm a}y^{\bm a+\bm 1_T}\right)=0,\]
where $y^{\bm a}=y_2^{a_2} \cdots y_\ell^{a_\ell}$ for $\bm a=(a_1,\dots,a_\ell)$
and $\bm 1_T=\sum_{i \in T} \bm e_i$ is the incidence vector of $T$.
Since the monomials $y^{\bm a+\bm 1_T}$ appearing in the above equation are all distinct,
all the coefficients $c_{(W_T,T)}^{\bm a}$ must be zero.
This proves the desired linear independence.
\end{proof}

\begin{rem}
Proposition \ref{prop:Kbasis} also gives a $\K$-basis for $D(\A_G)$.
Indeed, since
\[
D(\A_G)\cong D_R(\A_G)\otimes_R S\cong D_R(\A_G)\otimes_\K \K[x_1],
\]
if $B$ is a $\K$-basis for $D_R(\A_G)$, then $\{u x_1^k \mid u \in B,\ k \geq 0\}$ is a $\K$-basis for $D(\A_G).$
\end{rem}

\subsection*{Proof of the main theorem}
We now prove Theorem \ref{thm:1.1}.
Let $G$ be a connected graph with vertex set $[\ell]$.
We regard $\K[P_G]$ as a module over $S=\K[x_1,\dots,x_\ell]$ by identifying $X_i$ with $x_i$.
By Lemma \ref{lem:Pbasis} and Proposition \ref{prop:Kbasis}, we know that
\begin{itemize}
    \item the set $\{ x_{(W,T)} X^{\bm a} \mid (W,T) \in P_G,\ \bm a \in \Z^\ell_T\}$ is a 
    $\K$-basis of $\K[P_G]$, and
    \item the set $\{ \theta_{W,T}^{\bm a} \mid (W,T) \in P_G,\ \bm a \in \Z^\ell_T\}$ is a $\K$-basis of $D_R(\A_G)$.
\end{itemize}
Let
\[
\phi_G : \K[P_G] \to D_R(\A_G)
\]
be the $\K$-linear isomorphism sending each $x_{(W,T)} X^{\bm a}$ to $\theta_{W,T}^{\bm a}$.
The following theorem completes the proof of Theorem \ref{thm:1.1}.

\begin{theorem}
The map $\phi_G$ is an isomorphism of $R$-modules.
\end{theorem}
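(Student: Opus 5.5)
Since $\phi_G$ is already a $\K$-linear bijection (it sends the basis $\mathrm{Base}(P_G)$ of Lemma \ref{lem:Pbasis} to the basis \eqref{eq:Kbasis} of Proposition \ref{prop:Kbasis}), the only thing to prove is $R$-linearity. The ring $R$ is generated as a $\K$-algebra by the differences $x_i-x_j$, so it suffices to show
\[
\phi_G\big((x_i-x_j)\cdot \xi\big)=(x_i-x_j)\,\phi_G(\xi)
\]
for every basis element $\xi=x_{(W,T)}X^{\bm a}$ and all $i<j$. Linearity for general elements of $R$ then follows by induction on degree and $\K$-linearity.

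On the face-ring side, $x_i-x_j$ acts as $X_i-X_j$ under the identification of the $S$-module structure. On the derivation side, we have the identity $(x_i-x_j)\theta=x_i\circ\theta-x_j\circ\theta$. So the key step is the comparison
\[
\phi_G\big(X_i\cdot x_{(W,T)}X^{\bm a}\big)= x_i\circ \theta^{\bm a}_{W,T}\qquad\text{for all } i\in[\ell],\ (W,T)\in P_G,\ \bm a\in\Z^\ell_T.
\]
Subtracting the cases $i$ and $j$ then gives the required relation. The operator $x_i\circ(-)$ is not $R$-linear, nor is it an $S$-action on $D_R(\A_G)$ (it need not preserve $D_R$ in general). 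We use it only as a bookkeeping device on the basis elements. Multiplication by $X_i$ alone is not an action of $R$ either; only the differences matter, and the same is true on both sides.

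To prove the comparison, split into the three cases $i\in T$, $i\in W$, and $i\in[\ell]\setminus(T\cup W)$. Lemma \ref{lem:obvious} expresses $X_i\,x_{(W,T)}X^{\bm a}$ in terms of $\mathrm{Base}(P_G)$: it equals $x_{(W,T)}X^{\bm a+\bm e_i}$, or $\sum_{C'}x_{(V(C'),T\cup\{i\})}X^{\bm a}$, or $x_{(W,T\cup\{i\})}X^{\bm a}$. Lemma \ref{technical} gives the exactly parallel formulas for $x_i\circ\theta^{\bm a}_{W,T}$, namely $\theta^{\bm a+\bm e_i}_{W,T}$, $\sum_{C'}\theta^{\bm a}_{V(C'),T\cup\{i\}}$, and $\theta^{\bm a}_{W,T\cup\{i\}}$. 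Applying $\phi_G$ termwise matches the two sides, provided every term is a genuine basis element. This requires checking that $\bm a\in\Z^\ell_{T\cup\{i\}}$ and $\bm a+\bm e_i\in\Z^\ell_T$, which is clear.

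The main obstacle is case (2) when $T\cup\{i\}=[\ell]$. Then $C-\{i\}$ is empty and both sums are empty, so both sides vanish consistently, since $P_G$ requires $T\subsetneq[\ell]$. In case (3) we must also have $T\cup\{i\}\neq[\ell]$, which holds because $W\neq\emptyset$. With these edge cases checked, the comparison holds on a basis. So $\phi_G$ commutes with multiplication by each $x_i-x_j$, hence with all of $R$, and $\phi_G$ is an isomorphism of $R$-modules.
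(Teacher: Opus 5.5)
Your argument is correct and is essentially the paper's proof. Both reduce to multiplication by $x_i-x_j$ on basis elements, match $X_i\cdot x_{(W,T)}X^{\bm a}$ with $x_i\circ\theta^{\bm a}_{W,T}$ case by case via Lemmas \ref{lem:obvious} and \ref{technical}, and subtract using $x_i\circ\theta-x_j\circ\theta=(x_i-x_j)\theta$.

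One aside in your proposal is false, although nothing depends on it. The operator $x_i\circ(-)$ satisfies $x_i\circ(g\theta)=g\,(x_i\circ\theta)$ for all $g\in S$, so it is $R$-linear. It also preserves $D_R(\A_G)$, as Lemma \ref{technical} already shows on the basis, and these operators commute. So they do define an $S$-module structure on $D_R(\A_G)$, as noted in Remark \ref{Smodule}.
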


\begin{proof}
Since $\phi_G$ is an isomorphism of $\K$-vector spaces,
what we must prove is that $\phi_G$ is $R$-linear.
For this,
it suffices to show
$$\phi_G\big( (X_i-X_j)x_{(W,T)}X^{\bm a}\big)=(x_i-x_j) \theta_{W,T}^{\bm a}$$
for all $1 \leq i <j \leq \ell$, $(W,T) \in P_G$ and $\bm a \in \Z^\ell_T$.
This follows from Lemmas \ref{lem:obvious} and \ref{technical}, which imply
\[
\phi_G (X_k x_{(W,T)}X^{\bm a})= x_k \circ \theta_{W,T}^{\bm a} \ \ \text{ for all }k \in [\ell],
\]
together with the fact that $x_i \circ \theta_{W,T}^{\bm a}-x_j \circ \theta_{W,T}^{\bm a}=
(x_i-x_j) \theta_{W,T}^{\bm a}$.
\end{proof}

\begin{rem}
\label{Smodule}
The action ``$x_i \circ$" actually define the $S$-module structure to $D_R(\A_G)$ and the above proof says that $\K[P_G]$ and $D_R(\A_G)$ are isomorphic as $S$-modules by this module structure.
We also note that since $x_1 \circ x_2 \circ \cdots x_\ell \circ \theta=0$ for any $\theta \in \Der(S)$,
the module $D_R(\A_G) \cong \K[P_G]$ is also a module over $S/(x_1\cdots x_\ell)$.
\end{rem}

\begin{rem}
\label{Rdual}
By the isomorphism \eqref{dual} in the introduction,
$\K[P_G]$ and the module $M(G)=J(G^c)/(x_1\cdots x_\ell)$ are $R$-duals each other. Indeed,
\[
\K[P_G] \cong H_{\mideal_S}^{\ell-1} (M(G))^\vee
\cong H_{\mideal_R}^{\ell-1} (M(G))^\vee \cong
\Hom_R(M(G),R).
\]
where the second isomorphism appears in \cite[\S 7.4]{AM} and the third isomorphism is the local duality.
\end{rem}

\section{Applications and related results}

We discuss some consequences of our main result, as well as some related results.

\subsection*{Hilbert series of $D(\A_G)$}
A combinatorial formula for the Hilbert series of $D(\A_G)$
was recently given in \cite[Corollary 7.13]{AM} using the isomorphism \eqref{dual}.
The $\K$-basis in Proposition \ref{prop:Kbasis} gives a different proof of this result.
Recall that
for a graded $\K$-vector space $M=\bigoplus_{i \in \Z} M_i$,
where $M_i$ is the homogeneous component of degree $i$,
its {\bf Hilbert series} is the series
\[\mathrm{Hilb}(M,t)=\sum_{i \in \Z} (\dim_\K M_i)t^i.\]

Let $G$ be a (not necessarily connected) graph with vertex set $[\ell]$.
For $(W,T) \in P_G$, we define $\rank(W,T)=|T|$.
Note that when $G$ is connected, this rank coincides with the rank of elements in the simplicial poset $P_G$ defined at the beginning of section 2.
For $k=0,1,\dots,\ell-1$, define the {\bf $f$-vector}
$f(P_G)=(f_0,f_1,\dots,f_{\ell-1})$ of $P_G$ by
\[f_k= |\{ \sigma \in P_G \mid \rank(\sigma)=k\}| \ \ \ \text{ for }k=0,1,\dots,\ell-1.\]
We also define the {\bf $h$-vector} $h(P_G)=(h_0,h_1,\dots,h_{\ell-1})$ of $P_G$ by the equation
\[\sum_{k=0}^{\ell-1} f_k t^k (1-t)^{\ell-1-k} = \sum_{k=0}^{\ell-1} h_k t^k.\]
If $G$ is connected,
then each $f_i$ counts the number of $(i-1)$-dimensional faces of the CW-complex $\Gamma(P_G)$ corresponding to $P_G$,
and 
the vectors $f(P_G)$ and $h(P_G)$ coincide with the $f$-vector and the $h$-vector of the simplicial poset $P_G$ as defined in \cite{St}\footnote{We shift the indices of $f(P)$ compared with Stanley's notation. Our $f_i$ is $f_{i-1}$ in \cite{St}.}.

\begin{cor}
\label{cor:Hilb}
Let $G$ be a connected graph with vertex set $[\ell]$
and let $h(P_G)=(h_0,h_1,\dots,h_{\ell-1})$ be the $h$-vector of $P_G$. Then
\[
\mathrm{Hilb}\big(D(\A_G),t\big)=\frac 1 {(1-t)^\ell} (h_0+h_1t+ \cdots + h_{\ell-1}t^{\ell-1}).\]
\end{cor}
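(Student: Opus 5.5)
We will compute the Hilbert series directly from the explicit $\K$-basis of Proposition \ref{prop:Kbasis}, combined with the remark that $D(\A_G)\cong D_R(\A_G)\otimes_\K \K[x_1]$. The grading has to be fixed first. We use the standard (polynomial) degree on $\Der(S)$ in which $\partial_{x_i}$ has degree $0$, so that $\theta_{W,T}^{\bm a}$ is homogeneous of degree $\sum_{t\in T}(1+a_t)=|T|+|\bm a|$. This is the grading under which $\theta_{W,T}$ has degree $\rank(W,T)$, matching $\deg x_\sigma=\rank\sigma$ in $\K[P_G]$. If the intended convention is the one with $\deg\partial_{x_i}=-1$, the formula only shifts by a power of $t$, and I would adjust accordingly.

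\emph{Step 1 (series of $D_R(\A_G)$).} By Proposition \ref{prop:Kbasis}, the elements $\theta_{W,T}^{\bm a}$ with $(W,T)\in P_G$ and $\bm a\in\Z^\ell_T$ form a homogeneous $\K$-basis. For a fixed $(W,T)$ with $|T|=k$, the vectors $\bm a\in\Z^\ell_T$ range freely over $\Z_{\ge0}^T$. The contribution of $(W,T)$ is therefore $t^k\prod_{t\in T}\sum_{a\ge0}t^a=t^k/(1-t)^k$. Summing over $P_G$ gives
\[
\mathrm{Hilb}\big(D_R(\A_G),t\big)=\sum_{k=0}^{\ell-1} f_k\,\frac{t^k}{(1-t)^k}.
\]

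\emph{Step 2 (pass to $D(\A_G)$).} By the remark following Proposition \ref{prop:Kbasis}, multiplying this basis by the powers $x_1^m$ with $m\ge0$ gives a basis of $D(\A_G)$. Hence the series above gets multiplied by $1/(1-t)$:
\[
\mathrm{Hilb}\big(D(\A_G),t\big)=\sum_{k=0}^{\ell-1} f_k\frac{t^k}{(1-t)^{k+1}}=\frac{1}{(1-t)^\ell}\sum_{k=0}^{\ell-1} f_k t^k(1-t)^{\ell-1-k}.
\]
By the definition of the $h$-vector, the last sum equals $\sum_k h_k t^k$, which is the claimed formula.

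The only point needing care is the second step. We must check that $D(\A_G)\cong D_R(\A_G)\otimes_\K\K[x_1]$ holds as graded spaces, so that $\{u x_1^m\}$ is really a homogeneous basis. This rests on $S=R[x_1]$ with $S$ free over $R$, and on $D(\A_G)=D_R(\A_G)\otimes_R S$. Since the remark is stated earlier in the paper, I would invoke it directly and only verify that the degrees add. Alternatively, one could bypass Proposition \ref{prop:Kbasis} and use Theorem \ref{thm:1.1} together with Stanley's formula $\mathrm{Hilb}(\K[P],t)=\sum f_k t^k/(1-t)^k$. However, the direct basis count is self-contained.
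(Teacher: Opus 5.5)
Your proposal is correct and matches the paper's proof: count the $\K$-basis of Proposition \ref{prop:Kbasis} to get $\sum_k f_k t^k/(1-t)^k$ for $D_R(\A_G)$, then multiply by $1/(1-t)$ using $D(\A_G)\cong D_R(\A_G)\otimes_R S$. Your grading convention ($\deg\partial_{x_i}=0$) is the one the paper implicitly uses, and the alternative you mention via Theorem \ref{thm:1.1} and Stanley's formula is exactly the observation made in Remark \ref{rem3.4}.
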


\begin{proof}
Let $f(P_G)=(f_0,f_1,\dots,f_{\ell-1})$. By Proposition \ref{prop:Kbasis}, we have
\begin{align*}
\mathrm{Hilb}\big(D_R(\A_G),t\big)&=
\sum_{\sigma \in P_G} \frac {t^{\rank \sigma}}{(1-t)^{\rank \sigma}}\\
&= \frac 1 {(1-t)^{\ell-1}}\left(\sum_{k=0}^{\ell-1} f_k t^k (1-t)^{\ell-1-k}\right)\\
&=\frac 1 {(1-t)^{\ell-1}}(h_0+h_1t+\cdots+h_{\ell-1}t^{\ell-1}).
\end{align*}
Then, since $D(\A_G)\cong D_R(\A_G) \otimes_R S$, we have
\[
\mathrm{Hilb}\big(D(\A_G),t\big)
=
\frac 1{1-t} \mathrm{Hilb}\big(D_R(\A_G),t\big)
=\frac 1 {(1-t)^{\ell}}(h_0+h_1t+\cdots+h_{\ell-1}t^{\ell-1}),
\]
as desired.
\end{proof}

\begin{rem}
\label{rem3.4}
For a connected graph $G$, Corollary \ref{cor:Hilb} can be considered as a special case of \cite[Proposition 3.8]{St}, which proves that if $P$ is a simplicial poset of rank $r$ with the $h$-vector $(h_0,h_1,\dots,h_r)$ then $\mathrm{Hilb}(\K[P],t)=\frac 1{(1-t)^r} (h_0+h_1t+\cdots +h_rt^r)$.
\end{rem}

\begin{example}
Consider the case when $G=$\raisebox{-2.5mm}{\includegraphics[width=0.06\linewidth]{4cycle.pdf}}.
One can see from Figure \ref{fig1} that $f(P_G)=(1,4,8,4)$.
Then $h(P_G)=(1,1,3,-1)$ and we have 
\[\mathrm{Hilb}\big(D(\A_G),t\big)=\frac {1+t+3t^2-t^3} {(1-t)^4}.\]
\end{example}

\begin{rem}
Let $G$ be a graph.
The $f$-vector of $P_G$ is related to the {\bf subgraph component polynomial} 
\[
\textstyle
Q(G;x,y)=\sum_{W \subset [\ell]} x^{|W|}y^{|\mathrm{Comp}(G[W])|}\]
defined in \cite{TAM}.
Indeed, 
$\big(\frac {\partial} {\partial y} Q(G;x,y)\big)|_{y=1}=\sum_{k=1}^{\ell} f_{\ell-k}x^k$.
\end{rem}

\subsection*{Combinatorial topology of $P_G$}
Here, we discuss some combinatorial and topological properties of $P_G$.
For a finite set $F$, we write $\overline F$ for the set of all subsets of $F$.
We regard $\overline F$ as a simplicial poset whose partial order is the inclusion.
Clearly, the corresponding CW-complex $\Gamma(\overline F)$ is a simplex of dimension $|F|-1$.
For two simplicial posets $P_1$ and $P_2$, their {\bf join} is the simplicial poset
\[
P_1*P_2 =\{ (\sigma,\tau) \mid \sigma \in P_1,\tau \in P_2\},
\]
where $(\sigma,\tau) \geq (\sigma',\tau')$ if and only if $\sigma \geq \sigma'$ and $\tau \geq \tau'$.
For a finite poset $P$ and $\sigma \in P$,
the link of $\sigma$ in $P$ is the poset $\mathrm{lk}_P(\sigma)=\{\tau \in P \mid \tau \geq \sigma\}$.
If $P$ is a simplicial poset, then $\mathrm{lk}_P(\sigma)$ is a simplicial poset with the minimal element $\sigma$.
The following statement follows directly from the definition of $P_G$.

\begin{lemma}
    \label{lem:straightforward}
    Let $G$ be a graph with vertex set $[\ell]$ and $\sigma=(W,T) \in P_G$.
    \begin{itemize}
        \item [(1)] The maximal elements of $P_G$ are $(\{1\},[\ell]\setminus \{1\}),\dots,(\{\ell\},[\ell]\setminus \{\ell\})$.
        \item [(2)] $\mathrm{lk}_{P_G}(\sigma)$ is isomorphic to $P_{G[W]}*\overline{\big([\ell]\setminus (W\cup T)\big)}$ as a poset.        
    \end{itemize}
\end{lemma}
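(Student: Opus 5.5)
For (1), I will use the order relation directly. If $(W,T)\le(W',T')$ then $|T|\le|T'|$. Every element $(W,T)$ has $W\neq\emptyset$, because $T\subsetneq[\ell]$ and $W$ is the vertex set of a component of $G-T$. Given $(W,T)$, pick any $w\in W$. Then $(\{w\},[\ell]\setminus\{w\})$ belongs to $P_G$, since $G-([\ell]\setminus\{w\})$ is the single vertex $w$. Moreover $\{w\}\subset W$ and $T\subset[\ell]\setminus\{w\}$, so $(W,T)\le(\{w\},[\ell]\setminus\{w\})$. Hence every element lies below one of these $\ell$ elements. Each of them is maximal: if $(\{w\},[\ell]\setminus\{w\})\le(W',T')$, then $W'\subset\{w\}$ and $T'\supset[\ell]\setminus\{w\}$, and $W'\neq\emptyset$ forces $W'=\{w\}$ and $T'=[\ell]\setminus\{w\}$. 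They are pairwise distinct, so these are exactly the maximal elements.

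For (2), write $U=[\ell]\setminus(W\cup T)$. I will define
\[\Phi:\mathrm{lk}_{P_G}(\sigma)\to P_{G[W]}*\overline U,\qquad (W',T')\mapsto \big((W',T'\cap W),\,T'\cap U\big).\]
Suppose $(W',T')\ge(W,T)$. Then $W'\subset W$ and $T'\supset T$, so $T'=T\sqcup(T'\cap W)\sqcup(T'\cap U)$. I have to check that $(W',T'\cap W)\in P_{G[W]}$, that is, that $W'$ is the vertex set of a component of $G[W]-(T'\cap W)$. The key observation is that no edge of $G$ joins $W$ to $U$, since $W$ is a component of $G-T$. Consequently the connected components of $G-T'$ that lie inside $W$ are exactly the components of $G[W\setminus T']$, which is $G[W]-(T'\cap W)$. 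Since $W'\subset W$, the set $W'$ is such a component. Also $T'\cap W\subsetneq W$ because $W'\neq\emptyset$, so $(W',T'\cap W)$ is a legitimate element of $P_{G[W]}$.

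The inverse map sends $((W',S),Q)$ to $(W',T\sqcup S\sqcup Q)$. To see that this lands in $P_G$, I note that $W'$ is a component of $G[W]-S$, and by the same no-edge argument it is a component of $G-(T\cup S\cup Q)$. Removing vertices of $U$ or $T$ does not affect anything inside $W$. The union $T\sqcup S\sqcup Q$ is proper because $W'\neq\emptyset$. This element lies above $(W,T)$ in $P_G$. The two maps are clearly mutually inverse.

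Finally, I will check that both maps preserve order. We have $(W_1,T_1)\le(W_2,T_2)$ if and only if $W_1\supset W_2$ and $T_1\subset T_2$. Since the $T_i$ all contain $T$ and are contained in $T\sqcup W\sqcup U$, the condition $T_1\subset T_2$ is equivalent to $T_1\cap W\subset T_2\cap W$ together with $T_1\cap U\subset T_2\cap U$. This is precisely the product order defining the join.

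The only substantive point is the component identification, namely that no edges join $W$ to $U$. Everything else is bookkeeping. One should also note that if $G[W]$ is disconnected the statement is meaningless, but $G[W]$ is connected by construction, so $P_{G[W]}$ is a genuine simplicial poset with minimum $(W,\emptyset)$, which corresponds to $\sigma$.
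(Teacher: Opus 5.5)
Your proof is correct and is the routine verification from the definition of $P_G$ that the paper has in mind when it says the lemma ``follows directly from the definition of $P_G$''; the paper writes out no proof. You correctly identify the one non-bookkeeping point: no edge of $G$ joins $W$ to $[\ell]\setminus(W\cup T)$, because $W$ is a component of $G-T$. This is what makes the components of $G-T'$ lying inside $W$ coincide with those of $G[W]-(T'\cap W)$, and it lets the remaining part of $T'$ range freely over the simplex factor.
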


We first show that $P_G$ is a special type of a simplicial poset, known as a normal pseudomanifold.
Let $P$ be a connected simplicial poset,
where we say that $P$ is connected if $\Gamma(P)$ is connected.
Maximal elements of $P$ are called {\bf facets} of $P$
and $P$ is said to be {\bf pure} if all its facets have the same rank.
A pure simplicial poset of rank $r$ is said to be a {\bf pseudomanifold} if it satisfies the following two conditions:
\begin{itemize}
    \item every rank $r-1$ element is covered by at most two elements, and
    \item the dual graph of $P$ is connected.
\end{itemize}
Here, the dual graph of $P$ is the graph whose vertices are the facets of $P$ and in which two facets $\sigma$ and $\tau$ are adjacent if there is a rank $r-1$ element covered by both $\sigma$ and $\tau$.
If moreover every rank $r-1$ element is covered by exactly two elements then we call $P$ a pseudomanifold without boundary, otherwise we call $P$ a pseudomanifold with boundary.
We note that for any pseudomanifold $P$ and $F \ne \emptyset$,
the join $P*\overline F$ is a pseudomanifold with boundary.
A pseudomanifold is said to be {\bf normal} if $\mathrm{lk}_P(\sigma)$ is connected  for all $\sigma \in P$ with $\rank(\sigma)\leq r-2$.
For a normal pseudomanifold $P$ and $\sigma \in P$,
its link $\mathrm{lk}_P(\sigma)$ is again a normal pseudomanifold
and we say that $\sigma$ is an {\bf interior face} of $P$ if $\mathrm{lk}_P(\sigma)$ is a normal pseudomanifold without boundary.
We regard facets as interior faces and a rank $r-1$ element of $P$ is an interior face if and only if it is covered by two facets.
For a graph $G$,
we say that  $W \subset V(G)$ is a {\bf clique} of $G$ if $G[W]$ is a complete graph.

\begin{prop}
    \label{pmanifold}
Let $G$ be a connected graph with vertex set $[\ell]$.
\begin{itemize}
    \item[(1)] $P_G$ is a normal pseudomanifold of rank $\ell-1$ whose dual graph is $G$. 
    \item[(2)] $\sigma=(W,T) \in P_G$ is an interior face of $P_G$ if and only if $W$ is a clique of $G$ and $T=[\ell]\setminus W$.
\end{itemize}
\end{prop}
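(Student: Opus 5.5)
The plan is to reduce everything to an explicit description of the elements of $P_G$ of rank $\ell-1$ and $\ell-2$, and then to handle links via Lemma \ref{lem:straightforward}(2).

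\textbf{Facets and ridges.} By Lemma \ref{lem:straightforward}(1), the facets are $F_i=(\{i\},[\ell]\setminus\{i\})$ for $i\in[\ell]$. All of them have rank $\ell-1$, so $P_G$ is pure of rank $\ell-1$ (Lemma \ref{graphposet}).

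A rank $\ell-2$ element has the form $(W,T)$ with $|T|=\ell-2$ and $[\ell]\setminus T=\{u,v\}$. There are two cases.
\begin{itemize}
\item If $\{u,v\}\in E(G)$, then $G-T$ is the single edge $uv$. So $W=\{u,v\}$, and this element is covered by exactly the two facets $F_u$ and $F_v$.
\item If $\{u,v\}\notin E(G)$, then $G-T$ has the two components $\{u\}$ and $\{v\}$. Each of $(\{u\},T)$ and $(\{v\},T)$ is covered only by the single facet $F_u$ (respectively $F_v$).
\end{itemize}
Hence every rank $\ell-2$ element is covered by at most two facets. Moreover, $F_u$ and $F_v$ share a codimension-one face exactly when $uv$ is an edge, so the dual graph of $P_G$ is $G$, which is connected.

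\textbf{Connectivity and normality.} Every face of $P_G$ lies in some facet. Facets that are adjacent in the dual graph share a ridge, so their closed cells meet. Since the dual graph is connected, $\Gamma(P_G)$ is connected, and $P_G$ is a pseudomanifold.

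For normality, take $\sigma=(W,T)$ with $|T|\le \ell-3$ and set $F=[\ell]\setminus(W\cup T)$. By Lemma \ref{lem:straightforward}(2), $\mathrm{lk}_{P_G}(\sigma)\cong P_{G[W]}*\overline F$, where $G[W]$ is connected. There are two cases.
\begin{itemize}
\item If $F\neq\emptyset$, the link is a cone over $\Gamma(P_{G[W]})$ with apex the simplex on $F$, hence connected.
\item If $F=\emptyset$, then $|W|=\ell-|T|\ge 3$, and the link is $P_{G[W]}$. Applying the facet and ridge analysis above to the connected graph $G[W]$ shows that $P_{G[W]}$ is pure with connected dual graph $G[W]$, hence connected by the same argument.
\end{itemize}
This proves (1).

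\textbf{Interior faces.} For (2), use the same link decomposition. If $F\neq\emptyset$, then $P_{G[W]}*\overline F$ has boundary, by the remark preceding the proposition. When $|W|=1$ the link is just a simplex, which has boundary when $F\neq\emptyset$. So an interior face must satisfy $T=[\ell]\setminus W$, and the link is then $P_{G[W]}$.

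Conversely, for $T=[\ell]\setminus W$, the ridge analysis applied to $G[W]$ shows that every ridge of $P_{G[W]}$ is covered twice exactly when every pair $u,v\in W$ is adjacent, i.e.\ when $W$ is a clique. The degenerate cases are consistent with this: $|W|=1$ gives a facet, which counts as interior, and $|W|=2$ reduces to the rank $\ell-2$ computation done above.

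The main obstacle I expect is bookkeeping. One must check that the isomorphism of Lemma \ref{lem:straightforward}(2) preserves ranks, so that ridges of the link correspond to the ridge computation for $G[W]$. One must also handle the small cases $|W|\le 2$ consistently with the stated conventions on interior faces.
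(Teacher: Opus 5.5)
Your proposal is correct and follows the paper's proof. You use the same classification of rank $\ell-2$ elements as $(\{u,v\},[\ell]\setminus\{u,v\})$ for edges and $(\{u\},[\ell]\setminus\{u,v\})$, $(\{v\},[\ell]\setminus\{u,v\})$ for non-edges, which gives the pseudomanifold property and dual graph $G$, and you apply Lemma \ref{lem:straightforward}(2) to links for both normality and the interior-face criterion, supplying details the paper leaves implicit: why $P_{G[W]}*\overline F$ is connected, and why its ridges are all doubly covered exactly when $F=\emptyset$ and $W$ is a clique (your facet-sharing connectivity argument tacitly needs $\ell\ge 3$, since for $\ell=2$ the shared ridge is $\hat 0$ and $\Gamma(P_G)$ is two points, a degenerate case the paper also ignores).
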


\begin{proof}
It is obvious that $P_G$ is pure and has rank $\ell-1$ by the definition of $P_G$.
Also,
every rank $\ell-2$ element $\sigma$ must be of the form either $\sigma=(\{i\},[\ell]\setminus\{i,j\})$ or $\sigma=(\{i,j\},[\ell]\setminus\{i,j\})$.
In the former case, $P_G$ has only one facet $(\{i\},[\ell]\setminus\{i\})$ that is larger than $\sigma$
and in the latter case there are exactly two facets $(\{i\},[\ell]\setminus\{i\})$ and
$(\{j\},[\ell]\setminus\{j\})$ that are larger than $\sigma$.
This proves that $P_G$ is a pseudomanifold and the dual graph of $P_G$ must be the original graph $G$.
We also note that $P_G$ has no boundary only when $G$ is the complete graph $K_\ell$.

Let $\sigma=(W,T)\in P_G$.
Lemma \ref{lem:straightforward}(2) guarantees that
$\mathrm{lk}(\sigma)\cong P_{G[W]} *\overline {\big([\ell]\setminus(W\cup T)\big)}$ is connected.
Hence $P_G$ is a normal pseudomanifold, completing the proof of (1).
Also, $\mathrm{lk}(\sigma)$ has no boundary exactly when $G[W]$ is a complete graph and $[\ell]\setminus(T\cup W)=\emptyset$.
This proves (2).
\end{proof}

\begin{rem}
As we explained in the proof,
$P_G$ is a pseudomanifold without boundary only when $G$ is a complete graph.
In that case $P_G$ is isomorphic to the face poset of the boundary of a simplex.
\end{rem}

\begin{rem}
Proposition \ref{pmanifold} gives one nontrivial consequence.
Suppose that $G$ is not a complete graph.
Then we can define the boundary $\partial P_G$ of $P_G$, which is a subposet of $P_G$ consisting of elements of $P_G$ which are not interior faces.
Let $J$ be the ideal of $\K[P_G]$ generated by $x_\sigma$ such that $\sigma$ is an interior face of $P_G$.
Then we obtain the short exact sequence 
\[0 \longrightarrow J \longrightarrow \K[P_G] \longrightarrow \K[\partial P_G] \longrightarrow 0.\]
As an $R$-module,
$\K[P_G]$ is isomorphic to $D_R(\A_G)$.
Also, in this case, 
the canonical module $\big(H^{\ell-1}_{\mideal_R}(D_R(\A_G))\big)^\vee$ of $D_R(\A_G)$ is isomorphic to 
$\Omega_R^1(\A_G)$
(see \cite[\S 7.4]{AM} for the definition of $\Omega_R^1(\A_G)$)
and it follows from \cite[Theorem 4.4]{Sa} that the canonical module $\big(H_{\mideal_R}^{\ell-1}(\K[P_G])\big)^\vee$ of $\K[P_G]$ is isomorphic to the ideal $J$,
where $\mideal_R$ is the graded maximal ideal of $R$.
Therefore, we have the following nontrivial exact sequence\footnote{Using \cite[Theorems 7.2 and 7.6]{AM} one can check that the first map in the exact sequence is given by changing $dx_i$ to $\big(\prod_{k \ne i} (x_k-x_i)\big)\partial_{x_i}$.}
\[0 \longrightarrow \Omega_R^1(\A_G) \longrightarrow D_R(\A_G)\longrightarrow \K[\partial P_G] \longrightarrow 0.\]
The exact sequence comes from Stanley--Reisner theory and it is not clear if this exact sequence has an arrangement-theoretic meaning.
\end{rem}

We finally show that the freeness of $D(\A_G)$ corresponds to the property that $\Gamma(P_G)$ is a ball (or a sphere).
It was proved by Stanley that $D(\A_G)$ is free if and only if $G$ is chordal \cite[Theorem 3.3]{ER94},
where a graph $G$ is said to be {\bf chordal} if $G$ has no induced cycle of length $\geq 4$.
We prove the following statement.

\begin{prop}
    \label{freeness}
Let $G \ne K_\ell$ be a connected graph with vertex set $[\ell]$.
The following conditions are equivalent.
\begin{itemize}
    \item[(1)] $D(\A_G)$ is a free $S$-module.
    \item[(2)] $G$ is a chordal graph.
    \item[(3)] $\Gamma(P_G)$ is homeomorphic to an $(\ell-2)$-dimensional ball.
\end{itemize}
\end{prop}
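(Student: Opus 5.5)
(1)$\Leftrightarrow$(2) is Stanley's theorem \cite[Theorem 3.3]{ER94}, so the work is (2)$\Leftrightarrow$(3). For (3)$\Rightarrow$(1) I would go through Stanley--Reisner theory. If $\Gamma(P_G)$ is a ball, then $P_G$ is Cohen--Macaulay over every field, because Reisner's criterion also holds for simplicial posets. Hence $\K[P_G]$ is a Cohen--Macaulay ring of dimension $\ell-1$. Its natural linear system of parameters is $X_1-X_\ell,\dots,X_{\ell-1}-X_\ell$. These are exactly the generators $x_i-x_\ell$ of $R$ under the module structure of Theorem \ref{thm:1.1}. This is a system of parameters because $\K[P_G]$ is finitely generated over $R$: by Lemma \ref{lem:obvious} every basis element is reached from the finitely many $x_\sigma$ by multiplying by the $X_i$. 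A finitely generated Cohen--Macaulay module of full dimension over a polynomial ring is free, so $D_R(\A_G)$ is a free $R$-module and $D(\A_G)\cong D_R(\A_G)\otimes_R S$ is a free $S$-module.

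For (2)$\Rightarrow$(3) I would induct on $\ell$ using a simplicial vertex $v$ of the chordal graph $G$, that is, a vertex whose neighbourhood $N(v)$ is a clique. Since $G\neq K_\ell$, a chordal $G$ has two non-adjacent simplicial vertices, so I may choose $v$ with $G-v$ connected. The goal is to decompose $\Gamma(P_G)$ by looking at how elements of $P_G$ meet $v$.
\begin{itemize}
\item The elements with $v\in T$ are the pairs $(W,T)$ with $W$ a component of $(G-v)-(T\setminus v)$. This piece is a cone over $P_{G-v}$, so its realization is a cone over $\Gamma(P_{G-v})$.
\item The elements with $v\notin T$ form the closed star of the facet-vertex data around the facet $(\{v\},[\ell]\setminus\{v\})$.
\end{itemize}
Since $N(v)$ is a clique, the elements with $v\in W$ are determined in a controlled way: $W$ is either $\{v\}$ or contains $v$ together with the component of $G-v-T$ touching $N(v)\setminus T$. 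I would try to check that $\Gamma(P_G)$ is obtained from a ball built from $P_{G-v}$ by gluing a simplex, namely the facet $(\{v\},[\ell]\setminus\{v\})$, along a ball in its boundary. This would use Lemma \ref{lem:straightforward}(2) for links. By the inductive hypothesis $\Gamma(P_{G-v})$ is a ball, or a sphere if $G-v$ is complete. The base cases are $\ell=2$ (a segment) and $G=K_{\ell-1}$ plus a pendant clique.

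For (3)$\Rightarrow$(2) there is a simpler route: a ball is Cohen--Macaulay, so $D(\A_G)$ is free by the argument above, and hence $G$ is chordal by Stanley. Alternatively one can argue directly. If $G$ has an induced cycle $C_m$ with $m\ge4$ on vertex set $W$, take $T=[\ell]\setminus W$. The link of $(W,T)$ is the join of $P_{C_m}$ and a simplex, and one shows that $\Gamma(P_{C_m})$ has nonzero top homology relative to its boundary that is inconsistent with being a ball. The Hilbert series with $h_{\ell-1}<0$ in the $4$-cycle example illustrates this non-Cohen--Macaulayness.

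I expect the main obstacle to be the gluing step in (2)$\Rightarrow$(3): showing that adding the simplicial vertex $v$ changes $\Gamma(P_G)$ by a shelling-type move that preserves being a ball. If the direct topological bookkeeping becomes messy, I would instead show that $P_G$ is shellable for chordal $G$. I would order the facets by reversing a perfect elimination ordering, check via the dual graph $G$ that each new facet meets the earlier ones in a pure codimension-one ball of its boundary, and then invoke the fact that a shellable pseudomanifold with boundary is a ball. Proposition \ref{pmanifold} supplies that $P_G$ is a pseudomanifold with nonempty boundary.
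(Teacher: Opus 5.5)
\textbf{Verdict: genuine gap.} The architecture of your proposal is right, but the one substantial step, (2)$\Rightarrow$(3), is never actually proved. Both of your routes stop at ``I would check''.

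\textbf{The other implications.} Your (3)$\Rightarrow$(1), and hence (3)$\Rightarrow$(2) via Stanley, is the paper's argument: $\K[P_G]$ is Cohen--Macaulay, then Auslander--Buchsbaum. There is one slip. Lemma \ref{lem:obvious} only gives finite generation over $\K[X_1,\dots,X_\ell]$, not over $R$. Use instead $\K[P_G]\cong D_R(\A_G)\subset\bigoplus_i R\,\partial_{x_i}$ with $R$ Noetherian, or note that $X_\ell$ is integral over $R$ because $X_1\cdots X_\ell=0$.

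\textbf{The shelling route.} This is the paper's route. Write $\sigma_k=(\{k\},[\ell]\setminus\{k\})$ and let $N$ be the set of earlier neighbours of $i$. Purity of $\langle\sigma_1,\dots,\sigma_{i-1}\rangle\cap\langle\sigma_i\rangle$ cannot be read off the dual graph. The dual graph only tells you that the earlier facets sharing a ridge with $\sigma_i$ are the $\sigma_u$ with $u\in N$. It does not tell you that every face of $\sigma_i$ lying in some earlier facet lies in one of these ridges.

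For example, take the $4$-cycle with edges $\{1,2\},\{2,3\},\{3,4\},\{4,1\}$. Each $\sigma_k$ is dual-adjacent to an earlier one. Yet $\sigma_3$ and $\sigma_1$ share the vertex $(\{1,3,4\},\{2\})$, which is not in $(\{2,3\},\{1,4\})$, the only ridge $\sigma_3$ shares with $\sigma_1$ or $\sigma_2$.

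Write $\langle\sigma_i\rangle=\{(V(C_F),F)\mid F\subset[\ell]\setminus\{i\}\}$, where $C_F$ is the component of $G-F$ containing $i$. Since $(V(C_F),F)\in\langle\sigma_k\rangle\Leftrightarrow k\in V(C_F)$, purity is equivalent to \emph{the component $C_N$ of $i$ in $G-N$ containing no vertex smaller than $i$}. This is the missing idea.

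The paper proves it with a shortest path $i=v_0,\dots,v_m=j$ in $G-N$:
\begin{enumerate}
\item $v_1>i$, because all earlier neighbours of $i$ lie in $N$.
\item There is no interior local maximum $v_q>v_{q-1},v_{q+1}$, since the elimination property would make $\{v_{q-1},v_{q+1}\}$ an edge, contradicting minimality of the path.
\end{enumerate}
So the labels increase. Note also that the facet order must be one in which each vertex's \emph{earlier} neighbours form a clique (the paper's convention). With the opposite orientation the shelling can fail.

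\textbf{The inductive route.} This is a genuinely different argument and can be completed, but as written it misidentifies the pieces. Let $w=([\ell]\setminus\{v\},\{v\})$. The elements with $v\in T$ form $\mathrm{lk}_{P_G}(w)\cong P_{G-v}$ by Lemma \ref{lem:straightforward}(2). The cone is the closed star $\langle\sigma_u\mid u\neq v\rangle$, which also contains elements with $v\notin T$. Two things must be proved:
\begin{enumerate}
\item[(a)] The closed star equals $P_G\setminus\{(\{v\},T)\mid T\supset N(v)\}$ and is isomorphic to $P_{G-v}*\overline{\{w\}}$. This uses that $G[W]-v$ is connected whenever $v\in W\neq\{v\}$, which holds because $N(v)\setminus T$ is a clique.
\item[(b)] Its intersection with $\langle\sigma_v\rangle$ is $\langle(\{v,u\},[\ell]\setminus\{v,u\})\mid u\in N(v)\rangle$. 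This is a nonempty union of facets of $\partial\sigma_v$, and it is proper because $G\neq K_\ell$. Hence it is a ball lying in the boundary of both pieces, and the PL gluing lemma finishes.
\end{enumerate}
Here purity is free: $v$ is simplicial in all of $G$, so $\{v\}$ is already a component of $G-N(v)$. In this route the cone identification (a) does the work of the paper's path argument.

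\textbf{Base case.} Yours is off. $\Gamma(P_{K_2})$ is two points, not a segment. The first non-complete case is the path on three vertices, whose $\Gamma(P_G)$ is a path of three edges.
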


Before the proof,
we introduce some notation on graphs and simplicial posets.
Let $G=(V,E)$ be a graph.
For $v \in V$, the {\bf neighbor} of $v$ in $G$ is the set $N_G(v)=\{u \in V \mid \{u,v\} \in E\}$.
We say that a vertex $v \in V$ is {\bf simplicial} in $G$ if $G[N_G(v)]$ is a complete graph.
A sequence $v_1,\dots,v_\ell$ of the vertices of $G$ is said to be a {\bf perfect elimination ordering} of $G$ if $v_k$ is simplicial in $G[\{v_1,\dots,v_k\}]$ for $k=2,3,\dots,\ell$.
By Dirac's theorem \cite{Di}, a graph is chordal if and only if it admits a perfect elimination ordering.
Let $P$ be a simplicial poset of rank $r$.
For elements $\sigma_1,\dots,\sigma_k \in P$,
let $\langle \sigma_1,\dots,\sigma_k\rangle = \bigcup_{i=1}^k [\hat 0, \sigma_i]$
be the simplicial poset generated by $\sigma_1,\dots,\sigma_k$.
We say that $P$ is {\bf shellable} if
there is an ordering $\sigma_1,\dots,\sigma_m$ of the facets of $P$,
called a {\bf shelling} of $P$, such that
for $i=2,3,\dots, m$,
$\langle \sigma_1,\dots,\sigma_{i-1} \rangle \cap \langle \sigma _i\rangle$ is pure of rank $r-1$.
It is known that,
if a simplicial poset $P$ is a shellable pseudomanifold,
then the corresponding regular CW-complex $\Gamma(P)$ is homeomorphic to either a ball or a sphere.
See \cite[Theorem 11.4]{Bj95}.

\begin{proof}[Proof of Proposition \ref{freeness}]
The equivalence of (1) and (2) is Stanley's result.
We prove $(3) \Rightarrow (1)$.
Suppose that $\Gamma(P_G)$ is homeomorphic to an $(\ell-2)$-dimensional ball.
Then $\K[P_G]$ is a finitely generated Cohen--Macaulay $R$-module of dimension $\ell-1$ (see  \cite[Corollary 3.5]{St} and \cite[II, 4.3 Proposition]{St96}).
Since $R$ has dimension $\ell-1$,
it follows from the Auslander--Buchsbaum formula that $D_R(\A_G)\cong \K[P_G]$ has projective dimension $0$ over $R$ and hence is a free $R$-module.
Thus $D(\A_G)=D_R(\A_G)\otimes_R S$ is a free $S$-module.

It remains to prove $(2) \Rightarrow (3)$.
We may assume that $1,2,\dots,\ell$ is a perfect elimination ordering of $G$.
Let $\sigma_i=(\{i\},[\ell]\setminus \{i\})$ for all $i$.
Recall that $\sigma_1,\dots,\sigma_\ell$ are the facets of $P_G$.
We claim that the ordering $\sigma_1,\sigma_2,\dots,\sigma_\ell$ is a shelling of $P_G$.

Fix $i \in \{2,3,\dots,\ell\}$.
For each $F \subset [\ell] \setminus \{i\}$,
let $C_F$ be the unique connected component of $G-F$ that contains the vertex $i$.
Then \[
\langle \sigma_i \rangle = \big\{ \big(V(C_F),F\big) \mid F \subset [\ell] \setminus \{i\} \big\}.
\]
Let $N=N_{G[\{1,2,\dots,i-1\}}(i)$.
Then for each $u \in N$, 
since $\{i,u\}$ is an edge of $G$ we have 
\[
\big(V(C_{[\ell]\setminus \{i,u\}}),[\ell] \setminus \{i,u\} \big)= \big( \{i,u\},[\ell]\setminus \{i,u\}\big)
\]
 and we have
\begin{align}
\label{hoshihoshi}
    \langle \sigma_1,\dots,\sigma_{i-1} \rangle \cap \langle \sigma_i \rangle
    \supset  \big \langle \big ( \{i,u\}, [\ell] \setminus \{i,u\} \big) \mid u \in N \big\rangle.
\end{align}
We claim that equality holds in \eqref{hoshihoshi}.
Note that this proves that $\langle \sigma_1,\dots,\sigma_{i-1} \rangle \cap \langle \sigma_i \rangle$ is pure and completes the proof.

Observe that elements of $\langle \sigma_i \rangle$ that are not contained in the right-hand side of \eqref{hoshihoshi} are exactly elements $(V(C_F),F)$ with $F \supset N$.
Thus to prove the equality in \eqref{hoshihoshi} it suffices to prove $(V(C_N),N) \not \in \langle \sigma_1,\dots,\sigma_{i-1} \rangle.$
Also, since $(V(C_N),N) \in \langle \sigma_k \rangle$ if and only if $k$ is a vertex of $C_N$,
what we must prove is that the vertices $1,2,\dots,i-1$ are not contained in $C_N$.

Let $j \ne i$ be a vertex of $C_N$.
We prove $j \geq i$.
Take a shortest path $i=v_0,v_1,\dots,v_m=j$ from $i$ to $j$ in $C_N$ (equivalently in $G-N$).
Then we have $v_0 < v_1 < \cdots <v_m$ since if $v_{q+1} <v_q$ and $v_{q-1}<v_q$ for some $q\geq 1$ then $\{v_{q-1},v_{q+1}\}$ is an edge of $G$ because $1,2,\dots,\ell$ is a perfect elimination ordering,
contradicting the shortestness of the path.
This in particular proves $i<j$ as desired.

Thus $P_G$ is shellable. Since $G\ne K_\ell$, Proposition \ref{pmanifold} shows that $P_G$ has boundary. Hence $\Gamma(P_G)$ is not a sphere, and therefore it is homeomorphic to a ball.
\end{proof}

\subsection*{Local cohomology, projective dimension, and regularity}

Another invariant of $D(\A_G)$ that can be computed from the poset $P_G$ is its local cohomology.

For a simplicial poset $P$,
we write $\widetilde H_i(P)$ for the $i$th reduced homology group of the CW complex $\Gamma(P)$ with coefficients in $\K$.
For a graded $\K$-algebra $A$
and a finitely generated graded $A$-module $M$,
we write $H^i_{I}(M)$ for the $i$th local cohomology module with respect to the ideal $I \subset A$.
We also write $\mideal_A$ for the graded maximal ideal of $A$ and simply write $H^i(A)=H^i_{\mideal_A}(A)$.
We refer the reader to \cite{Hu} for the basics of local cohomology modules.
The following result, proved by Duval \cite[Theorem 5.5]{Duval} is a generalization of Hochster's formula for Stanley--Reisner rings of simplicial complexes \cite[II, 4.1 Theorem]{St96}.

\begin{lemma}[Duval]
\label{duvalcohomology}
Let $P$ be a simplicial poset. Then 
\[
\mathrm{Hilb}\big(H^i(\K[P]),t\big)= 
\sum_{\sigma \in P} \left(\dim_\K \widetilde H_{i-\rank(\sigma)-1}\big(\mathrm{lk}_P(\sigma)\big)\right) \frac {t^{-\rank(\sigma)}} {(1-t^{-1})^{\rank (\sigma)}}
\]
for all $i$.
\end{lemma}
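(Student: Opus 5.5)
We follow Hochster's classical strategy, adapted to face rings of simplicial posets, which is the approach of Duval. Let $r$ be the rank of $P$ and regard $\K[P]$ as a module over the polynomial ring $A=\K[x_v \mid v \in P_1]$. The ideal $\mideal_{\K[P]}$ and $\mideal_A\K[P]$ have the same radical, so the local cohomology of $\K[P]$ may be computed over $A$ by the \v{C}ech complex on the vertex variables. This ring is $\Z^{P_1}$-graded via the vertex variables, refined by $\deg x_\sigma=\sum_{v\in\vertex(\sigma)}\bm e_v$. In this fine grading, Lemma \ref{lem:duval} says the basis element $x_\sigma m$ has degree $\bm 1_{\vertex(\sigma)}+\deg m$. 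Equivalently, $\K[P]$ is the direct sum over $\sigma \in P$ of the vector spaces $x_\sigma\K[x_v \mid v\in\vertex(\sigma)]$.

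First, compute the localizations. Let $U\subset P_1$ and consider $\K[P]_{x_U}$, where $x_U=\prod_{v\in U}x_v$. A basis element $x_\sigma m$ survives the localization only if $x_\sigma x_U\neq 0$. By the face-ring relations, $x_U$ is the sum of $x_\tau$ over the faces $\tau$ with $\vertex(\tau)=U$. Hence $\K[P]_{x_U}$ is the direct sum over such $\tau$ of $\K[P]_{x_\tau}$, and each of these is a Laurent extension in the vertices of $\tau$ of a ring built from $\mathrm{lk}_P(\tau)$.

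Second, decompose the \v{C}ech complex by multidegree. Fix $\bm a\in\Z^{P_1}$ and write $\bm a=\bm a^+-\bm a^-$, with negative part supported on $N=\{v\mid a_v<0\}$ and positive support $S^+$. The degree-$\bm a$ piece of $\K[P]_{x_U}$ is spanned by formal elements indexed by faces $\rho$ with $N\subset U\subset\vertex(\rho)$ and $S^+\subset\vertex(\rho)$. Grouping faces by the face $\sigma$ with vertex set $N$ that lies under $\rho$, the degree-$\bm a$ strand splits as a direct sum over $\sigma$ with $\vertex(\sigma)=N$. Each summand is the cochain complex computing the relative or reduced cohomology of $\mathrm{lk}_P(\sigma)$, the complex of faces of the link, shifted by $|N|=\rank\sigma$.

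Third, show that the positive part contributes nothing. If $S^+\not\subset N$, the corresponding complex is the cochain complex of a cone, or of a star, and is therefore acyclic. This leaves $\bm a$ with $\bm a^+=0$ and $\supp(\bm a^-)=N=\vertex(\sigma)$, where $a_v\le -1$ for each $v\in N$. We then obtain
\[
H^i(\K[P])_{\bm a}\cong\widetilde H^{\,i-\rank\sigma-1}(\mathrm{lk}_P(\sigma);\K),
\]
summed over the $\sigma$ with vertex set $N$. Over a field, reduced cohomology and reduced homology have the same dimension. Summing over the $\bm a$ with $a_v\le -1$ on $\vertex(\sigma)$ gives $\prod_{v}\,t^{-1}/(1-t^{-1})=t^{-\rank\sigma}/(1-t^{-1})^{\rank\sigma}$. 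Coarsening to the $\Z$-grading then yields the formula.

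The main obstacle is the second step. Unlike in the simplicial-complex case, several faces can share the same vertex set $N$, and products $x_\sigma x_\tau$ expand into sums. The \v{C}ech differentials must therefore be checked to respect the splitting by $\sigma$, and the identification of each strand with the cochain complex of $\Gamma(\mathrm{lk}_P(\sigma))$ needs care, including the incidence signs of the CW-structure. The first thing to try is to use the basis of Lemma \ref{lem:duval} to verify directly that multiplication by $x_v$ sends $x_\rho m$ to a sum over faces covering $\rho$, exactly as in Lemma \ref{lem:obvious}. These covering faces are exactly the cells adjacent to $\rho$ in the link, which would give the required match with the cellular coboundary.
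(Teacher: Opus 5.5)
The paper does not prove this lemma; it simply cites Duval [Theorem 5.5]. Your outline is the standard Hochster-style \v{C}ech computation run on Duval's basis, and the $\bm a^+=0$ case works as you say. For fixed $\sigma$ with $\vertex(\sigma)=N$, the strand has basis $\{\tau \mid \tau\ge\sigma\}$, with $\tau$ in \v{C}ech degree $\rank\tau$. The differential is $\tau\mapsto\sum_{\pi\gtrdot\tau}\pm\pi$ (signs fixed by a total order on $P_1$). This is the augmented cellular cochain complex of $\Gamma(\mathrm{lk}_P\sigma)$, shifted by $\rank\sigma+1$.

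There is a real gap in your vanishing argument for $S^+\neq\emptyset$: the justification is the simplicial-complex one, and it does not transfer to simplicial posets.

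First, the degree-$\bm a$ part of $\K[P]_{x_U}$ has basis indexed by pairs $(\tau,\rho)$ with $\vertex(\tau)=U$, $\tau\le\rho$, and $\vertex(\rho)=U\cup S^+$ \emph{exactly}, not just $\supseteq$. The reason is that basis elements $x_\rho m$ of $\K[P]_{x_\tau}$ have degree $\ge1$ on $\vertex(\rho)\setminus\vertex(\tau)$. Moreover, one $\tau$ can lie below several such $\rho$, and each gives its own basis vector. So for $S^+\neq\emptyset$ the strand is not a complex on faces of $\mathrm{lk}_P(\sigma)$, hence not a ``relative cochain complex of the link''. Nor is it the cochain complex of a star in $\Gamma(\mathrm{lk}_P\sigma)$, and such stars need not be contractible anyway. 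Concretely, take the paper's example $P=\{\hat0,v_1,v_2,e_1,e_2\}$ with $\bm a=\bm e_{v_2}$. The strand has dimensions $1,3,2$ and is exact, while the closed star of $v_2$ is the whole circle.

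The correct identification is as follows. Each $\rho$ in the strand lies above a unique $\delta\ge\sigma$ with $\vertex(\delta)=N\cup S^+$. The differential either fixes $\rho$ or replaces it by covers $\pi\gtrdot\rho$, so $\delta$ is preserved and the strand splits over such $\delta$. Write $T=\vertex(\tau)\cap S^+$. Then the $\delta$-summand has basis $(\rho,T)$ with $\rho\ge\delta$ and $T\subseteq S^+$. It is the augmented cochain complex of the join $\mathrm{lk}_P(\delta)*\overline{S^+}$, which is a cone because $S^+\neq\emptyset$, hence acyclic. Alternatively, filter by $\rank\rho$: the associated graded is a direct sum over $\rho$ of Boolean complexes on $S^+$, each of which is exact.

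With this repair the rest of your argument goes through. Your ``main obstacle'' is also misplaced: the splitting by $\sigma$ is immediate. The step where multiple minimal upper bounds genuinely matter is this acyclicity step.
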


We emphasize that the local cohomology modules of face rings have no nonzero graded components in positive degrees, so their Hilbert series are expanded in negative powers of $t$.

Let $G$ be a connected graph with vertex set $[\ell]$
and let $P_G$ be the corresponding simplicial poset.
We already know that $\K[P_G]$ is a finitely generated module over $R=\K[x_2-x_1,\dots,x_\ell-x_1]$,
where $x_i$ is identified with $\sum_{\sigma \in P_G, \mathrm{supp}(\sigma)=\{i\}} x_\sigma$,
so $\K[P_G]$ is integral over $R$ and the radical of $(x_2-x_1,\dots,x_\ell-x_1)\K[P_G]$ is equal to the maximal ideal of $\K[P_G]$ (see e.g.\ \cite[Proposition 5.6 and Corollary 5.8]{AM}). This guarantees
\[
H_{\mideal_R}^i\big(\K[P_G]\big) \cong H^i \big(\K[P_G]\big)
\]
as $R$-modules (see \cite[Propositions 2.13 and 2.14]{Hu}).
Since 
\[D(\A_G) \cong D_R(\A_G)\otimes_R S \cong D_R(\A_G) \otimes_{\K} \K[x_1],\]
by the K\"unneth formula, we have
\[H_{\mideal_S}^i \big(D(\A_G)\big) \cong 
H_{\mideal_R}^{i-1} \big(D_R(\A_G)\big) \!\otimes_\K\! H^1\big(\K[x_1]\big)
\cong
H_{\mideal_R}^{i-1} (D_R(\A_G))\! \otimes_\K\! \big(x_1^{-1} \K[x_1^{-1}]\big).\]
Since $D_R(\A_G) \cong \K[P_G]$,
by Lemma \ref{duvalcohomology} we get the following formula.

\begin{cor}
\label{cor:localcohomology}
Let $G$ be a connected graph with vertex set $[\ell]$. Then
\[
\mathrm{Hilb}(H^i_{\mideal_S}\big(D\big(\A_G)\big),t\big)=
\sum_{\sigma \in P_G} \left(\dim_\K \widetilde H_{i-\rank(\sigma)-2}\big(\mathrm{lk}_{P_G}(\sigma)\big) \right)\frac {t^{-\rank(\sigma)-1}} {(1-t^{-1})^{\rank (\sigma)+1}}\]
for all $i$.
\end{cor}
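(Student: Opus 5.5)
The formula follows by combining the three facts recorded just before the statement: the comparison of local cohomology over $R$ and over $\K[P_G]$, the Künneth decomposition, and Duval's formula (Lemma \ref{duvalcohomology}).

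First, by Theorem \ref{thm:1.1}, $D_R(\A_G)\cong\K[P_G]$ as graded $R$-modules. Since $\K[P_G]$ is finite over $R$ and $\sqrt{\mideal_R\K[P_G]}$ is the graded maximal ideal of $\K[P_G]$, we get
\[
H^{j}_{\mideal_R}\big(D_R(\A_G)\big)\cong H^j\big(\K[P_G]\big)
\]
as graded vector spaces for every $j$. One should check that the isomorphism $\phi_G$ preserves degrees. It sends $x_{(W,T)}X^{\bm a}$, of degree $|T|+|\bm a|$, to $\theta^{\bm a}_{W,T}$, whose coefficients have degree $|T|+|\bm a|$. So it is graded, provided we grade $D(\A_G)$ by polynomial degree of the coefficients. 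This matches the grading used in Corollary \ref{cor:Hilb}.

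Second, we use $D(\A_G)\cong D_R(\A_G)\otimes_\K\K[x_1]$ together with $S=R\otimes_\K\K[x_1]$ and $\mideal_S=\mideal_R S+(x_1)S$. The Künneth formula for local cohomology of such a tensor product, where only $H^1(\K[x_1])\neq 0$, gives
\[
H^i_{\mideal_S}\big(D(\A_G)\big)\cong H^{i-1}_{\mideal_R}\big(D_R(\A_G)\big)\otimes_\K x_1^{-1}\K[x_1^{-1}].
\]
Hilbert series multiply under $\otimes_\K$, and $\mathrm{Hilb}(x_1^{-1}\K[x_1^{-1}],t)=t^{-1}/(1-t^{-1})$.

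Third, apply Lemma \ref{duvalcohomology} with $i-1$ in place of $i$. This produces the summands
\[
\dim_\K\widetilde H_{i-\rank(\sigma)-2}\big(\mathrm{lk}_{P_G}(\sigma)\big)\,\frac{t^{-\rank\sigma}}{(1-t^{-1})^{\rank\sigma}}.
\]
Multiplying each by $t^{-1}/(1-t^{-1})$ gives exactly the stated formula.

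The only delicate point is justifying the Künneth step with the correct gradings. If needed, I would compute it via the Čech complex: take a system of parameters $y_2=x_2-x_1,\dots,y_\ell=x_\ell-x_1$ of $R$ together with $x_1$. The Čech complex on these elements is the tensor product over $\K$ of the Čech complex of $D_R(\A_G)$ over $R$ and that of $\K[x_1]$. The Künneth theorem over a field then gives the claimed isomorphism in each cohomological degree, and the degree shift by one comes from the single nonvanishing $H^1$ of $\K[x_1]$.
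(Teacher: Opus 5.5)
Your proposal is correct and follows the paper's own argument: use finiteness of $\K[P_G]$ over $R$ and the radical of $\mideal_R\K[P_G]$ to identify $H^{j}_{\mideal_R}(D_R(\A_G))$ with $H^j(\K[P_G])$, apply Künneth to $D_R(\A_G)\otimes_\K\K[x_1]$ to shift the cohomological index by one, and then apply Duval's formula. Your check that $\phi_G$ preserves degrees and your Čech-complex justification of the Künneth step only make explicit what the paper leaves implicit.
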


\begin{rem}
We use the convention that, 
for a simplicial poset $P \ne \{\hat 0\}$,
we have $\widetilde H_{i}(P)=0$ for $i<0$,
while $\widetilde H_{-1}(\{\hat 0\}) \cong \K$.
In Corollary \ref{cor:localcohomology},
since $\mathrm{lk}_{P_G}(\sigma)$ is connected, we also have
$\widetilde H_{0}(\mathrm{lk}_{P_G}(\sigma))=0$.
Consequently,
when $\ell \geq 3$ 
we have $H^i_{\mideal_S}(D(\A_G))=0$ for $i=0,1,2$. This reflects the fact that $\pd_S(D(\A_G)) \leq \ell-3.$
\end{rem}

Recall that for a finitely generated graded $S$-module $M$,
the {\bf depth} of $M$ is the number
\[
\depth(M)=\min\{ i \mid H_{\mideal_S}^i(M) \ne 0\}
\]
and the {\bf (Castelnuovo--Mumford) regularity} of $M$ is the number
\[
\mathrm{reg}(M)=\max\{i+j\mid H^i_{\mideal_S}(M)_j \ne 0\}.
\]
Recall that by the Auslander--Buchsbaum formula,
the projective dimension $\mathrm{pd}_S(M)$ of $M$ equals $\ell-\mathrm{depth}(M)$.
By Corollary \ref{cor:localcohomology} we also get the following statement.

\begin{cor}
\label{cor:pdandreg}
Let $G$ be a connected graph with vertex set $[\ell]$ and let $r$ be a positive integer. Then
\begin{itemize}
    \item[(1)] $\pd_S\!\big(D(\A_G)\big) \!\leq\! r$ $\Leftrightarrow$
    $\widetilde H_{i}\big(\mathrm{lk}_{P_G}(\sigma)\big)\!=\!0 $ for all $i\!<\!\ell\!-\!r\!-\!\rank(\sigma)\!-\!2$ and $\sigma \!\in\! P_G$.
    \item[(2)] $\reg(D(\A_G)) \leq r$ $\Leftrightarrow$
    $\widetilde H_{i}(\mathrm{lk}_{P_G}\big(\sigma)\big)=0 $ for all $i\geq r$ and $\sigma \in P_G$.
\end{itemize}
\end{cor}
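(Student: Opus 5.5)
We will read both statements off Corollary \ref{cor:localcohomology}, using $\pd_S(M)=\ell-\depth(M)$ for (1) and the definition of $\reg$ for (2). The main input is that each summand in that corollary contributes a nonzero Hilbert series exactly when the relevant reduced homology is nonzero. Moreover, the series $t^{-\rank(\sigma)-1}/(1-t^{-1})^{\rank(\sigma)+1}$ is supported in degrees $j\le -\rank(\sigma)-1$, and it is nonzero in every such degree. Since all coefficients are nonnegative, no cancellation can occur. So $H^i_{\mideal_S}(D(\A_G))\neq 0$ if and only if $\widetilde H_{i-\rank(\sigma)-2}(\mathrm{lk}_{P_G}(\sigma))\ne 0$ for some $\sigma\in P_G$. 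In that case, the top nonzero degree of $H^i_{\mideal_S}(D(\A_G))$ is $-\rank(\sigma)-1$, where $\sigma$ has minimal rank among those $\sigma$ with nonvanishing homology.

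For (1): $\pd_S(D(\A_G))\le r$ is equivalent to $\depth\ge \ell-r$, that is, $H^i_{\mideal_S}(D(\A_G))=0$ for all $i<\ell-r$. By the observation above, this holds if and only if $\widetilde H_{i-\rank(\sigma)-2}(\mathrm{lk}_{P_G}(\sigma))=0$ for all $\sigma$ and all $i<\ell-r$. Substituting $k=i-\rank(\sigma)-2$, the condition reads $\widetilde H_k(\mathrm{lk}_{P_G}(\sigma))=0$ for all $k<\ell-r-\rank(\sigma)-2$, which is the stated criterion. Negative indices cause no trouble. We only need the convention that $\widetilde H_{-1}$ is nonzero only for $\mathrm{lk}(\sigma)=\{\sigma\}$, i.e.\ for facets. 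That case is included consistently, since then $i=\rank(\sigma)+1=\ell$, and the condition $i<\ell-r$ fails because $r>0$.

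For (2): $\reg(D(\A_G))=\max\{i+j \mid H^i_{\mideal_S}(D(\A_G))_j\neq0\}$. By the degree analysis, for fixed $i$ the contribution of $\sigma$ attains its maximum value $i+j=i-\rank(\sigma)-1$, and it does so exactly when $\widetilde H_{i-\rank(\sigma)-2}(\mathrm{lk}(\sigma))\ne0$. Writing $k=i-\rank(\sigma)-2$, we get $i+j=k+1$. Hence
\[
\reg(D(\A_G))=1+\max\{k \mid \widetilde H_k(\mathrm{lk}_{P_G}(\sigma))\ne0 \text{ for some }\sigma\}.
\]
Consequently $\reg\le r$ if and only if $\widetilde H_k(\mathrm{lk}(\sigma))=0$ for all $k\ge r$ and all $\sigma$.

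The only delicate step is this exact bookkeeping: nonnegativity rules out cancellation, and the top degree of $t^{-m}/(1-t^{-1})^m$ is exactly $-m$. I would also double-check the index shift coming from the K\"unneth step, which has already been absorbed into Corollary \ref{cor:localcohomology}. In particular, the off-by-one in (2) needs care: it depends on counting $\rank(\sigma)+1$ rather than $\rank(\sigma)$, so I would verify the resulting formula on a free example such as a tree.
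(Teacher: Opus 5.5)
Your proposal is correct and follows the paper's route: the paper likewise reads both statements directly off Corollary~\ref{cor:localcohomology}, using Auslander--Buchsbaum for (1) and the local-cohomology definition of regularity for (2). Your explicit bookkeeping fills in exactly the details the paper leaves implicit: all coefficients of $t^{-m}/(1-t^{-1})^m$ with $m=\rank(\sigma)+1\ge 1$ are positive, so there is no cancellation and the top degree is $-m$, and the facet contribution $\widetilde H_{-1}$ lands only in $H^{\ell}$.
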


\begin{example}
By Corollary \ref{cor:pdandreg}(1),
we have $\pd_S(D(\A_G))=\ell-3$ if and only if 
$\widetilde H_1(P_G) \ne 0$ 
when $\ell \ge 4$.
Also, 
by Corollary \ref{cor:pdandreg}(2),
if $\widetilde H_{\ell-2}(P_G)=0$ and $\widetilde H_{\ell-3}(P_G)\ne 0$, then we have $\reg(D(\A_G))=\ell-2$.

Let $G$ be a cycle of length 4.
Then the CW complex $\Gamma(P_G)$ is homeomorphic to the band $S^1 \times I$ (see Figure \ref{fig1}), so $\widetilde H_1(P_G) \ne 0$ and $\widetilde H_2(P_G)=0$.
Hence $\pd_S(D(\A_G))=1$ and $\reg(D(\A_G))=2$.
\end{example}

We say that $G$ is \textbf{weakly chordal} if neither $G$ nor its complement $G^c$ contains an induced cycle of length at least $5$.
Note that chordal graphs are weakly chordal.
Abe--K\"uhne--M\"ucksch--M\"uhlherr \cite{AKMM} recently proved that
$\mathrm{pd}_S(D(\A_G)) \leq 1$ if and only if $G$ is weakly chordal.
Considering this result,
it would be interesting to seek a combinatorial characterization
for the condition $\mathrm{pd}_S(D(\A_G)) \leq r$ for any $r$ using Corollary \ref{cor:pdandreg}.
At present, even the following consequence of \cite{AKMM} seems nontrivial.

\begin{cor}
\label{weaklychordal}
Let $G$ be a connected graph.
Then
$G$ is weakly chordal if and only if
$
\widetilde{H}_{i}(\mathrm{lk}_{P_G}(\sigma))= 0$
for all $i <\ell-3-\rank(\sigma)$ and $\sigma \in P_G$.
\end{cor}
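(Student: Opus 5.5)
The plan is to combine the theorem of Abe--K\"uhne--M\"ucksch--M\"uhlherr \cite{AKMM} with Corollary \ref{cor:pdandreg}(1), taking $r=1$. By \cite{AKMM}, $G$ is weakly chordal if and only if $\pd_S(D(\A_G))\leq 1$. Since $G$ is connected, Theorem \ref{thm:1.1} and all the results of the previous subsection apply to $P_G$. Corollary \ref{cor:pdandreg}(1) with $r=1$ then says that $\pd_S(D(\A_G))\le 1$ holds if and only if
\[
\widetilde H_i\big(\mathrm{lk}_{P_G}(\sigma)\big)=0 \quad\text{for all } i<\ell-1-\rank(\sigma)-2=\ell-3-\rank(\sigma) \text{ and all } \sigma\in P_G,
\]
which is exactly the homological condition in the statement. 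Chaining these two equivalences gives the corollary. Corollary \ref{cor:pdandreg} was stated for positive integers $r$, so $r=1$ is allowed.

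The only step that needs care is checking the index bookkeeping in Corollary \ref{cor:pdandreg}(1) at $r=1$. I would rederive it from Corollary \ref{cor:localcohomology} as follows.

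First, $\pd_S\le 1$ is equivalent to $\depth\ge \ell-1$, by the Auslander--Buchsbaum formula.

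Second, this depth condition means $H^i_{\mideal_S}(D(\A_G))=0$ for all $i\le \ell-2$.

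Third, by Corollary \ref{cor:localcohomology}, the Hilbert series of $H^i_{\mideal_S}(D(\A_G))$ is a sum over $\sigma\in P_G$ of terms $\dim_\K\widetilde H_{i-\rank(\sigma)-2}(\mathrm{lk}_{P_G}(\sigma))$ times a series with positive coefficients. No cancellation can occur, so the module vanishes exactly when every homology group appearing in the sum vanishes.

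Fourth, substitute $j=i-\rank(\sigma)-2$. The range $i\le\ell-2$ becomes $j\le \ell-4-\rank(\sigma)$, that is, $j<\ell-3-\rank(\sigma)$, as required.

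The only possible subtlety is the convention in the preceding remark about $\widetilde H_{-1}$ of the one-point poset $\{\hat 0\}$. This case arises when $\sigma$ is a facet, where $\rank(\sigma)=\ell-1$. The bound then reads $i<-2$, so no condition is imposed and the convention does not matter. I do not expect any genuine obstacle, because the combinatorial content of the statement sits entirely in \cite{AKMM}.
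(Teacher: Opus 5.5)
Your proposal is correct and is exactly the paper's route: the paper gives no separate proof and presents the corollary as the combination of the theorem of \cite{AKMM} ($\pd_S(D(\A_G))\le 1$ if and only if $G$ is weakly chordal) with Corollary \ref{cor:pdandreg}(1) at $r=1$. Your rederivation of the index shift from Corollary \ref{cor:localcohomology} is also right. This covers the depth condition $H^i_{\mideal_S}=0$ for $i\le \ell-2$, the absence of cancellation among the nonnegative series, and the harmless facet case.
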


Corollary \ref{cor:pdandreg} indeed has some consequences for the projective dimension of $D(\A_G)$.
For example,
the following statement immediately follows from the corollary.

\begin{prop}
Let $G$ be a $k$-connected graph with vertex set $[\ell]$. Then 
\[\pd_S(D(\A_G)) \leq \ell-k-1.\]
\end{prop}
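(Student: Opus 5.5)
By Corollary \ref{cor:pdandreg}(1) with $r=\ell-k-1$, it suffices to show that for every $\sigma=(W,T)\in P_G$
\[
\widetilde H_i\big(\mathrm{lk}_{P_G}(\sigma)\big)=0 \quad \text{for all } i<k-1-\rank(\sigma)=k-1-|T|.
\]
If $|T|\geq k-1$ there is nothing to check, since reduced homology in negative degrees vanishes except for $\{\hat 0\}$. In that exceptional case $\widetilde H_{-1}\neq 0$, and we would need $-1\geq k-1-|T|$, i.e.\ $|T|\geq k$. So we first record that when $|T|=k-1$, the element $\sigma$ is not maximal, because $|T|\le k-1<\ell-1$ (a $k$-connected graph has more than $k$ vertices).

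So assume $|T|\leq k-2$. Since $G$ is $k$-connected, $G-T$ is connected, hence $W=[\ell]\setminus T$ and $[\ell]\setminus(W\cup T)=\emptyset$. By Lemma \ref{lem:straightforward}(2), $\mathrm{lk}_{P_G}(\sigma)\cong P_{G[W]}=P_{G-T}$. The graph $G-T$ is $(k-|T|)$-connected on $\ell-|T|$ vertices. Thus, setting $k'=k-|T|$, the whole statement reduces to the case $T=\emptyset$, applied to every $k'$-connected graph $H$:
\[
\widetilde H_i(P_H)=0 \quad\text{for } i<k'-1.
\]

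To prove this, I would use the structure of $P_H$ near the bottom. Every element $(W',T')$ with $|T'|<k'$ has $W'=V(H)\setminus T'$, since $H-T'$ is connected. Hence the subposet of elements of rank $\le k'$ is exactly the poset of all subsets of $V(H)$ of size $\le k'$, attaching one element per $T'$ of size $k'$ when $H-T'$ is connected, and possibly several when $T'$ is a separator of size $k'$. Consequently the $(k'-1)$-skeleton of $\Gamma(P_H)$ agrees with the $(k'-1)$-skeleton of the $(|V(H)|-1)$-simplex, up to duplicated top cells. Since the $(k'-2)$-skeleton of $\Gamma(P_H)$ is the full $(k'-2)$-skeleton of a simplex, and cells of dimension $\ge k'-1$ do not affect $\widetilde H_i$ for $i<k'-2$ and only add cycles-killing boundaries in degree $k'-2$, cellular homology gives $\widetilde H_i(P_H)=0$ for $i\le k'-2$. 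More precisely, $\widetilde H_{k'-2}$ is a quotient of $\widetilde H_{k'-2}$ of the $(k'-1)$-skeleton, and that skeleton contains the full $(k'-1)$-skeleton of the simplex, whose $\widetilde H_{k'-2}$ vanishes.

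The main care point is the bookkeeping of the index shift between rank and dimension, including the edge case $\{\hat 0\}$. I would double-check it against the $4$-cycle example, where $k=2$ gives the bound $\pd\le 1$.
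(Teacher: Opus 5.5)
Your proposal is correct and is essentially the paper's proof. In both, $k$-connectivity forces the elements of rank $<k$ to be exactly $([\ell]\setminus T,T)$, so $\Gamma(P_G)$ and its links contain the low-dimensional skeleton of a simplex, plus only extra top-dimensional cells glued along it. Cellular homology then gives $\widetilde H_i(\mathrm{lk}_{P_G}(\sigma))=0$ for $i<k-1-\rank(\sigma)$. Your explicit reduction of the link to $P_{G-T}$, with $G-T$ being $(k-|T|)$-connected, and your separate check of the $\widetilde H_{-1}$ edge case simply spell out steps the paper leaves implicit.
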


\begin{proof}
Fix $\sigma \in P_G$ with $\rank(\sigma) \leq k-1$.
By Corollary \ref{cor:pdandreg},
to prove the proposition what we must prove is
\[
\widetilde H_i\big(\mathrm{lk}_{P_G}(\sigma)\big)=0
\ \ 
\text{ for all $i<k-1-\rank(\sigma)$.}
\]
Recall that the $j$-skeleton $\mathrm{skel}_j(\Gamma)$ of a CW-complex $\Gamma$ is the subcomplex of $\Gamma$ consisting of faces of $\Gamma$ of dimension $\leq j$.
The $k$-connectedness of $G$ implies that 
\begin{align}
\label{laststatement}    
\{(W,T) \in P_G \mid |T| <k\}=\{ ([\ell] \setminus T,T) \mid T \subset [\ell] \text{ and } |T| <k\}.
\end{align}
We denote by $\Delta^d=\Gamma(\overline{\{1,2,\dots,d\}})$ a simplex of dimension $(d-1)$.
By the above equation,
the $(k-2)$-skeleton of $\Gamma(P_G)$ coincides with the $(k-2)$-skeleton of the $(\ell-1)$-dimensional simplex $\Delta^{\ell}$.
Moreover, since there is at least one element $\sigma \in P_G$ with $\mathrm{supp}(\sigma)=T$ for each $T \subsetneq [\ell]$,
the complex $\Gamma(P_G)$ contains the $(k-1)$-skeleton of an $(\ell-1)$-dimensional simplex as a subcomplex.
Considering  $\mathrm{lk}_{P_G}(\sigma)$,
it follows that
$\mathrm{skel}_{k-2-\rank(\sigma)}(\Gamma(\mathrm{lk}_{P_G}(\sigma)))$ coincides with $\mathrm{skel}_{k-2-\rank(\sigma)}(\Delta^{\ell-1-\rank(\sigma)})$,
and we may consider that $\mathrm{lk}_{P_G}(\sigma)$ contains the $(k-1-\rank (\sigma))$-skeleton of an $\Delta^{\ell-1-\rank(\sigma)}$ as a subcomplex.
This proves
\begin{itemize}
    \item $\widetilde H_i\big(\mathrm{lk}_{P_G}(\sigma)\big)\cong \widetilde H_i\big( \Delta^{\ell-1-\rank(\sigma)}\big)=0$ for $i< k-2 -\rank(\sigma)$, and
    \item $\widetilde H_{k-2 -\rank(\sigma)}(\mathrm{lk}_{P_G}\big(\sigma)\big)$ is a quotient space of $\widetilde H_{k-2 -\rank(\sigma)}\big(\Delta^{\ell-1-\rank(\sigma)}\big)=0$.
\end{itemize}
(Recall $k<\ell$ by the definition of the connectivity).
Hence we have the desired equation
\eqref{laststatement}.
\end{proof}

\end{document}